\newtheorem{theorem}{Theorem}
\newtheorem{proposition}{Proposition}
\newtheorem{lemma}{Lemma}
\newtheorem{corollary}{Corollary}
\theoremstyle{definition}
\newtheorem{definition}{Definition}
\newtheorem{example}{Example}
\begin{document}

\pagestyle{plain}

\title{\textsc{Labeled Fibonacci Trees}}

\author{
St\'{e}phane Legendre\\
\\
\textit{Team of Mathematical Eco-Evolution}\\
\textit{Ecole Normale Sup\'{e}rieure}\\
\textit{Paris, France}\\}
%\texttt{legendre@ens.fr}\\}

\date{}

\maketitle

The study describes a class of integer labelings of the Fibonacci tree, the tree of descent introduced by Fibonacci. In these labelings, Fibonacci sequences appear along ascending branches of the tree, and it is shown that the labels at any level are consecutive integers. The set of labeled trees is a commutative group isomorphic to $\mathbb{Z}^2$, and is endowed with an order relation. Properties of the Wythoff array are recovered as a special instance, and further properties of the labeled Fibonacci trees are described. These trees can be viewed as generalizations of the Wythoff array.

\section{Introduction}
The book of Hofstadter \cite{Hofstadter 1980} (Figure 30, p. 136) contains an outstanding mathematical object, a subtree of the Fibonacci tree labeled by the set of integers (highlighted on the right of Fig. \ref{figure:fibtree:F12}). In the book, the tree is used to represent the values taken by the recursive function $g(n) = n - gg(n-1)$. From the root of the tree, level after level, consecutive integers miraculously match Fibonacci sequences appearing along ascending branches of the tree: the Fibonacci sequence on the main branch, the Lucas sequence on the second branch, and other Fibonacci-type sequences. This correspondence was proved by Tognetti, Winley and van Ravenstein \cite{Tognetti 1990} in 1990.

In fact, \textit{all} positive Fibonacci sequences eventually appear as ascending branches of the tree. For example, the Lucas sequence
\begin{displaymath}
\ldots -4, \; 3, \; -1, \; 2, \; 1, \; 3, \; \underline{4}, \; \underline{7}, \; \underline{11}, \; \underline{18}, \; \underline{29}, \; \ldots
\end{displaymath}
is represented in the tree from the underlined terms. This is what I could show when I discovered the tree in 1986. To some disappointment, I realized that a similar result had already been found by Morrison \cite{Morrison 1980} in the context of the Wythoff array. Like the Hofstadter tree, the Wythoff array \cite{SloaneW} contains every integer exactly once, and represents every Fibonacci sequence exactly once.\\

In this study, a set of labeled Fibonacci trees is described, generalizing the  Hofstadter tree. First, the (unlabeled) Fibonacci tree is introduced (Section \ref{section:tree}), and properties of the Fibonacci words and Wythoff pairs are recalled (Section \ref{section:word}). Then labeling rules for the Fibonacci tree are given (Section \ref{section:labels}). According to these rules, Fibonacci sequences appear as successive labels along ascending branches of the tree. It is shown that the labels at any level of the tree form consecutive integers. In Section \ref{section:wythoff}, the Hofstadter tree and the Wythoff array are recovered as a special instance of these labeled trees. In Section \ref{section:set}, it is shown that the set $\Phi$ of labeled Fibonacci trees has the structure of a commutative group isomorphic to $\mathbb{Z}^2$. The representation of integer intervals and Fibonacci sequences by elements of $\Phi$ are explored in Section \ref{section:representation}. Finally, an order relation on the set $\Phi$ is described in Section \ref{section:order}. According to this relation, only two trees contain nested copies of themselves. They correspond to the Wythoff arrays representing the positive and negative Fibonacci sequences.

\section{The Fibonacci tree}\label{section:tree}

The \textit{Fibonacci tree} is the tree of descent of the rabbit family introduced by Leonardo da Pisa in his book \textit{Liber Abaci} (1202). In the original problem (slightly reformulated), at each time step:

\begin{itemize}
\item An adult female $u$ survives to the next generation, and gives birth to a female juvenile $v$.
\item A juvenile $v$ survives to the next generation, and becomes an adult $u$.
\end{itemize}

\noindent These rules translate
\begin{equation}\label{fibonacci:scheme}
\left\{
\begin{array}{c c c}
u &\rightarrow &uv\\
v &\rightarrow &u
\end{array}\right\}.
\end{equation}

\begin{figure}[t]
\centering
\includegraphics[scale=0.4]{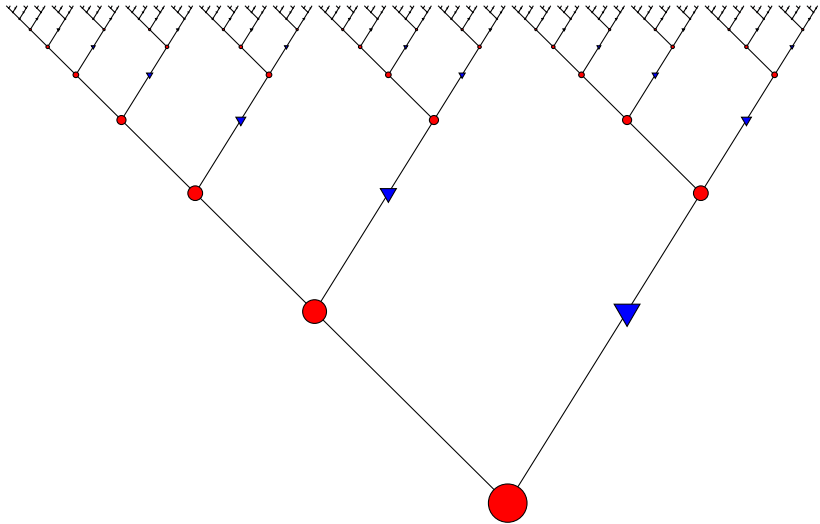}
\caption{The Fibonacci tree.}
\label{figure:fibtree}
\end{figure}

\begin{figure}[h]
\centering
\includegraphics[scale=0.4]{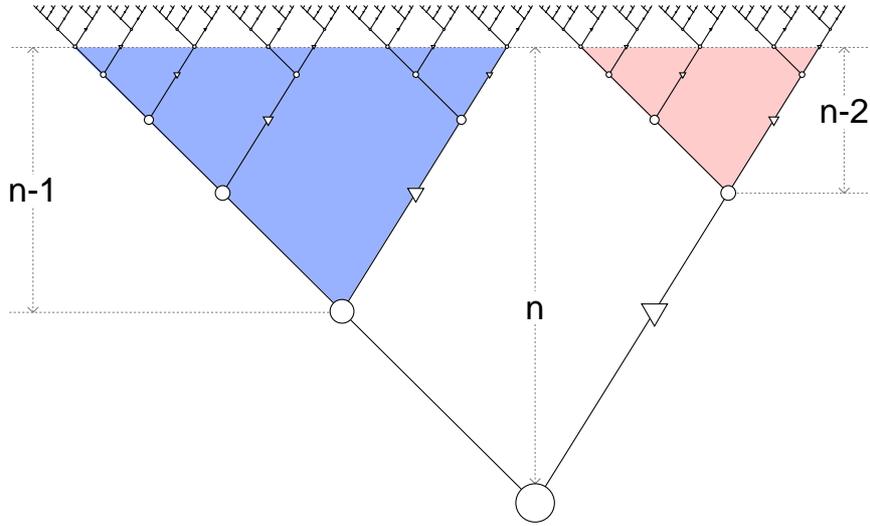}
\caption{The Fibonacci recursion.}
\label{figure:fibtree:recursion}
\end{figure}

Starting from a single adult $u$ at time $n=0$, the tree is built according to scheme (\ref{fibonacci:scheme}). It is drawn using the golden ratio in Figure \ref{figure:fibtree} with $u$-nodes representing adults (circles) and $v$-nodes representing juveniles (triangles). The tree is assumed infinite, and because of the recursive nature of scheme (\ref{fibonacci:scheme}), it contains an infinity of nested copies of itself.

The root has level $n = 0$. The population size at time $n$ is the number $G_n$ of nodes at level $n$. The sequence $\mathbf{G}$ satisfies the Fibonacci recursion
\begin{displaymath}
G_{n} = G_{n-1} + G_{n-2}
\end{displaymath}
with $G_{0}  =1$, $G_{1} = 2$. A visual proof is given in Figure \ref{figure:fibtree:recursion}.
By the definition of the Fibonacci sequence,
\begin{displaymath}
\mathbf{F}:0,1,1,2,3,5,8,13,21,34,55,\ldots
\end{displaymath}
we have $G_n = F_{n+2}$.

Any sequence $\mathbf{G}$ satisfying the Fibonacci recursion can be extended to $\mathbb{Z}$ by setting $G_{n} = G_{n+2} - G_{n+1}$ for $n < 0$. Two such Fibonacci sequences $\mathbf{G}$ and $\mathbf{G'}$ are \textit{equivalent}, $\mathbf{G} \sim \mathbf{G'}$, if they are identical up to a shift of index. 

For $(a,b) \in \mathbb{Z}^2$, $\mathbf{F}^{a,b}$ denotes the Fibonacci sequence whose terms of index 0 and 1 are $a$ and $b$ respectively. Then,
\begin{equation}\label{equation:Fab}
F^{a,b}_{n} = aF_{n-1} + bF_{n}, \quad n \in \mathbb{Z}.
\end{equation}

\section{Fibonacci words and Wythoff pairs}\label{section:word}

In this section classical and less classical results about Fibonacci words and Wythoff pairs are recalled, to be used in the sequel.

\textit{Fibonacci words} $W_{n}$ over the alphabet $\{{u,v}\}$ are generated from the word $W_{0}=u$ by the substitutions of scheme (\ref{fibonacci:scheme}):
\begin{displaymath}
\begin{array}{l l l}
W_{0} &= &u\\
W_{1} &= &uv\\
W_{2} &= &uvu\\
W_{3} &= &uvuuv\\
W_{4} &= &uvuuvuvu\\
W_{5} &= &uvuuvuvuuvuuv\\
{}    &\ldots
\end{array}
\end{displaymath}
Fibonacci words satisfy the Fibonacci recursion
\begin{displaymath}
W_{n} = W_{n-1}W_{n-2}.
\end{displaymath}
The length of $W_{n}$ is 
\begin{displaymath}
|W_n| = F_{n+2}, 
\end{displaymath}
with $F_{n+1}$ letters $u$ and $F_n$ letters $v$.

\begin{proposition}\label{proposition:fibonacci:tree:structure}
At level $n$ of the Fibonacci tree, the pattern of $u$-nodes and $v$-nodes is given by the Fibonacci word $W_{n}$.
\end{proposition}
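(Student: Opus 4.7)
\medskip

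\noindent\textbf{Proof plan.} I would proceed by induction on the level $n$, reading the nodes at each level from left to right in the planar embedding suggested by Figure~\ref{figure:fibtree}, with the convention (consistent with scheme~(\ref{fibonacci:scheme})) that a $u$-parent has its $u$-child to the left of its $v$-child. The base cases are immediate: at $n=0$ the single node at the root is $u = W_0$, and applying the production rules once gives the level-$1$ pattern $uv = W_1$.

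For the inductive step, suppose the level-$(n-1)$ pattern, read left to right, is the word $W_{n-1}$. Each letter of $W_{n-1}$ corresponds to a node of level $n-1$, and the children of that node at level $n$ form a contiguous block in the planar ordering. By the production rules, a $u$-node contributes the two-letter block $uv$ to level $n$, while a $v$-node contributes the single letter $u$. Concatenating these blocks in order, the level-$n$ pattern is obtained from $W_{n-1}$ by applying the letter-wise substitution $u \mapsto uv$, $v \mapsto u$, which is exactly the definition of $W_n$ given by scheme~(\ref{fibonacci:scheme}). Hence level $n$ reads $W_n$.

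As a sanity check, one can verify that this agrees with the recursion $W_n = W_{n-1}W_{n-2}$ by splitting $W_{n-1}$ as $W_{n-2}W_{n-3}$: the substitution applied to the first factor yields $W_{n-1}$ and applied to the second yields $W_{n-2}$, recovering the self-similar decomposition of level $n$ visualised in Figure~\ref{figure:fibtree:recursion}.

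\medskip

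\noindent\textbf{Main obstacle.} There is no real combinatorial difficulty; the only point requiring care is bookkeeping about the planar ordering of children, i.e.\ checking that ``$u$-child on the left, $v$-child on the right'' is genuinely the convention adopted, so that the concatenation of child-blocks at level $n$ matches the left-to-right reading of $W_{n-1}$ with the substitution applied letter by letter. Once this convention is fixed, the induction is mechanical and the statement follows directly from the definition of the Fibonacci word as a fixed-point of the substitution scheme~(\ref{fibonacci:scheme}).
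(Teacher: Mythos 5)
Your proof is correct and is essentially the paper's argument written out in full: the paper disposes of this in one sentence ("the Fibonacci word is generated by the same scheme as the Fibonacci tree with the same initial condition"), and your induction on levels with the letter-wise substitution is exactly the content of that sentence made explicit. The point about fixing the planar convention ($u$-child left of $v$-child) is the right detail to flag, but there is no divergence in approach.
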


\begin{proof}
The Fibonacci word is generated by the same scheme as the Fibonacci tree with the same initial condition.
\end{proof}

The \textit{Wythoff pairs} $(u(n),v(n))$, are given by two complementary sequences $\mathbf{u}$ and $\mathbf{v}$ over the set $\mathbb{N}^*$ of positive integers (sequences A000201 and A001950 in the Online Encyclopedia of Integer Sequences \cite{Sloane}), with $u(1) = 1$ and
\begin{equation}\label{equation:v}
v(n) = u(n) + n.
\end{equation}
The Wythoff pairs can be extended to $\mathbb{Z}$, by setting $u(0) = v(0) = -1$, and $u(-n) = -u(n)-1$, $v(-n) = -v(n)-1$ for $n \in \mathbb{N}^*$ (Table \ref{table:wythoff:pairs}). Then (\ref{equation:v}) still holds. The following formula, valid for $n \in \mathbb{N}^*$, also holds for $n \in \mathbb{Z}$:
\begin{equation}\label{equation:uu}
v(n) = uu(n) + 1.
\end{equation}
Moreover, for any nonzero integer $n \in \mathbb{Z}^*$,  
\begin{equation}\label{equation:beatty}
u(n) = \lfloor n\varphi \rfloor, \quad \varphi = \frac{1 + \sqrt{5}}{2} \text{  the golden ratio}.
\end{equation}

\begin{table}[h]
\centering
\begin{tabular}{l | c c c c c c c c c c c c c c c c}
{$n$}        &-6  &-5  &-4  &-3 &-2 &-1 &0  &1 &2 &3 &4  &5  &6  &7  &8\\
\hline
$\mathbf{u}$ &-10 &-9  &-7  &-5 &-4 &-2 &-1 &1 &3 &4 &6  &8  &9  &11 &12\\
$\mathbf{v}$ &-16 &-14 &-11 &-8 &-6 &-3 &-1 &2 &5 &7 &10 &13 &15 &18 &20\\
\hline
\end{tabular}
\caption{The Wythoff pairs.}
\label{table:wythoff:pairs}
\end{table}

Positive Wythoff pairs form pairs of consecutive terms of positive Fibonacci sequences. A Wythoff pair is \textit{primitive} if its rank is a term of $\mathbf{u}$ \cite{Silber 1977}. It is written $(uu(j),vu(j))$, $j \in \mathbb{N}^*$. A primitive pair starts a novel Fibonacci sequence, one that has not yet appeared in the Wythoff pairs. 

This property extends to negative Wythoff pairs and negative Fibonacci sequences, when heading to the left from $n = 0$ in Table \ref{table:wythoff:pairs}, with an exception: the pair $(-2,-3)$ at rank $-1 = u(0)$ is $\textit{not}$ primitive despite its rank being a term of $\mathbf{u}$. Indeed, this pair corresponds to the sequence $-1, -1, -2, -3, \ldots$ whose first terms $(-1,-1)$ appear at rank $0$. 

To summarize, the primitive Wythoff pairs are $(uu(j),vu(j))$ for $j \in \mathbb{Z}^*$.

\begin{proposition}\label{proposition:fibonacci:word:u}
Consider the word $W_n$ generated from $W_{n-1}$ by the substitutions (\ref{fibonacci:scheme}). In $W_n$,  the position $i \geqslant 1$ of a letter is given by the corresponding Wythoff sequence: $i = u(k)$ if the letter is $u$, where $k$ is the number of letters $u$ up to position $i$, and $i = v(l)$ if the letter is $v$, where $l$ is the number of letters $u$ up to the letter $u$ in $W_{n-1}$ that generates this letter $v$ in $W_n$.
\end{proposition}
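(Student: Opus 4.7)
I would prove this by induction on $n$, exploiting $W_n = \sigma(W_{n-1})$ where $\sigma(u) = uv$ and $\sigma(v) = u$. The base case $n = 1$ is immediate: in $W_1 = uv$, the unique $u$ sits at position $u(1) = 1$, and the unique $v$ sits at position $v(1) = 2$, generated by the 1st (and only) $u$ of $W_0$.

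For the inductive step, assume the claim for $W_{n-1}$. A letter at position $j$ of $W_{n-1}$ preceded by $k'$ letters $u$ and $(j-1) - k'$ letters $v$ has its $\sigma$-image beginning at position $1 + 2k' + (j - 1 - k') = j + k'$ of $W_n$. If this letter is the $k$-th $u$ of $W_{n-1}$, then $k' = k - 1$ and $j = u(k)$ by the IH, so the new $v$ in the image $uv$ lands at position $u(k) + k$, which equals $v(k)$ by equation (\ref{equation:v}). Since this $v$ is produced by the $k$-th $u$ of $W_{n-1}$, the integer $l$ of the statement equals $k$, and $i = v(l)$ holds. Ranging over the $F_n$ letters $u$ of $W_{n-1}$ exhausts all $F_n$ letters $v$ of $W_n$, so the $v$-positions in $W_n$ form precisely the initial segment $\{v(1), \dots, v(F_n)\}$ of $\mathbf{v}$.

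For the $u$-positions, I would invoke the complementarity of $\mathbf{u}$ and $\mathbf{v}$ over $\mathbb{N}^*$. The $F_{n+2} = F_n + F_{n+1}$ positions of $W_n$ split into the $F_n$ $v$-positions computed above and $F_{n+1}$ remaining $u$-positions. Complementarity forces these $u$-positions to be an initial segment of $\mathbf{u}$, namely $\{u(1), \dots, u(F_{n+1})\}$; since $\mathbf{u}$ is strictly increasing, the $k$-th $u$ encountered from the left in $W_n$ sits exactly at position $u(k)$.

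The delicate point is making the complementarity step airtight: one has to verify that $v(F_n + 1) > F_{n+2}$, so no $v$-value $\leq F_{n+2}$ is missed. This follows by counting, since the $F_n$ identified $v$-positions are all $\leq F_{n+2}$, and any additional $v$-value in $[1, F_{n+2}]$ would, together with the $F_{n+1}$ $u$-positions, overflow the $F_{n+2}$ available slots. A direct alternative would be to compute the $u$-image of the $l$-th $v$ of $W_{n-1}$ and invoke the identity $u(v(l)) = u(l) + v(l)$, but this identity is not immediate from equations (\ref{equation:v}) and (\ref{equation:uu}) alone, so the complementarity argument is the cleaner route.
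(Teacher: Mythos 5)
Your overall strategy---induct on $n$, push the letters $u$ of $W_{n-1}$ through the substitution to locate the letters $v$ of $W_n$ at positions $v(1),\dots,v(F_n)$, then recover the $u$-positions by complementarity---is sound and genuinely different from the paper's argument. The paper does not induct: it derives the relation $j_l = i_l + l$ between the position $i_l$ of the $l$-th letter $u$ of $W_{n-1}$ and the position $j_l$ of the $l$-th letter $v$ of $W_n$, and then identifies $(i_l,j_l)$ with $(u(l),v(l))$ by appealing to the fact that $(\mathbf{u},\mathbf{v})$ is the unique pair of complementary sequences satisfying $v(l)=u(l)+l$ with $u(1)=1$. Your base case, the position formula $j+k'$ for the image of a letter, and the identification $l=k$ are all correct, and you have correctly isolated the delicate point, namely $v(F_n+1)>F_{n+2}$.

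The gap is in your justification of that inequality. You argue that an extra $v$-value in $[1,F_{n+2}]$ would, ``together with the $F_{n+1}$ $u$-positions, overflow the $F_{n+2}$ available slots.'' But at this stage the $F_{n+1}$ positions occupied by letters $u$ in $W_n$ are only known to be the set-theoretic complement of $\{v(1),\dots,v(F_n)\}$ in $[1,F_{n+2}]$; you have not yet shown they are values of $\mathbf{u}$, so a hypothetical $v(F_n+1)\leqslant F_{n+2}$ would simply coincide with one of these letter-$u$ positions and produce no overflow. The counting argument is circular: the disjointness of the letter-$u$ positions from the $v$-values is precisely what you are trying to prove. The repair is cheap and stays inside your induction: by the induction hypothesis applied to $W_{n-1}$, the interval $[1,F_{n+1}]$ is exhausted by $\{u(1),\dots,u(F_n)\}\cup\{v(1),\dots,v(F_{n-1})\}$; since $u(F_n+1)$ lies in neither set (strict monotonicity of $\mathbf{u}$ on $\mathbb{N}^*$ and complementarity), we get $u(F_n+1)>F_{n+1}$, and then (\ref{equation:v}) gives $v(F_n+1)=u(F_n+1)+F_n+1>F_{n+1}+F_n=F_{n+2}$. (Alternatively, (\ref{equation:v}) and (\ref{equation:beatty}) give $v(m)=\lfloor m\varphi\rfloor+m=\lfloor m\varphi^2\rfloor$, and a direct estimate of $\lfloor (F_n+1)\varphi^2\rfloor$ works.) With that line inserted, your complementarity-plus-cardinality step correctly forces the letter-$u$ positions of $W_n$ to be $\{u(1),\dots,u(F_{n+1})\}$, and the proof closes. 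Your closing remark is also accurate: the direct computation for the $u$-images would require the identity $uv(l)=u(l)+v(l)$, which the paper never states, so the complementarity route is indeed the better fit here.
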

\begin{proof}
Letters $v$ in $W_n$ come uniquely from letters $u$ in $W_{n-1}$ by the substitution $u \rightarrow uv$, so that the $l$-th occurence of $u$ in $W_{n-1}$, in position $i_l$, produces the $l$-th occurence of $v$ in $W_n$, in position $j_l$. Up to position $i_l$, there are $i_l - l$ occurences of $v$, each producing $u$ by the substitution $v \rightarrow u$. The number of letters produced by the two substitutions up to the $l$-th occurence of $v$ is thus $2l + i_l - l = i_l + l = j_l$. The position of the first $u$ is $1 = u(1)$, and since $(i_l,j_l)$ form complementary sequences, we obtain: $i_l = u(l)$, and by (\ref{equation:v}), $j_l = u(l) + l = v(l)$. The position of $u$ in $W_{n-1}$ is $i_l = u(l)$. Similarly, the position of $u$ in $W_n$ is $i_k = u(k)$ where $k$ is the number of letters $u$ up to the given letter $u$ (included).
\end{proof}

\begin{proposition}\label{proposition:fibonacci:word:parent}
Consider the word $W_n$ generated from $W_{n-1}$ by the substitutions (\ref{fibonacci:scheme}), and a letter $y$ of $W_n$ that has $k$ letters $u$ on its left. In $W_{n-1}$, the position of the parent letter $x$ that generates $y$ is $k$.
\end{proposition}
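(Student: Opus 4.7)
The plan is to exploit a single observation about the substitution scheme: both productions $u \to uv$ and $v \to u$ begin with the letter $u$. Consequently, each letter $x_j$ of $W_{n-1}$ generates a contiguous "block" in $W_n$ whose first letter is always a $u$, and these blocks partition $W_n$ in left-to-right order.

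First I would make this precise. Writing $W_{n-1}=x_1 x_2 \cdots x_m$ and $W_n = b_1 b_2 \cdots b_m$ where $b_j$ is the image of $x_j$, I would note that $b_j$ has length 2 (and equals $uv$) when $x_j=u$, and length 1 (and equals $u$) when $x_j=v$. In either case the leading letter of $b_j$ is $u$, while any non-leading letter within a block is a $v$. It follows that the $u$-letters of $W_n$ are in bijection with the blocks: the $j$-th occurrence of $u$ in $W_n$, scanning from the left, is exactly the first letter of $b_j$.

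From this the proposition falls out immediately. If $y$ sits at position $i$ of $W_n$ and $k$ denotes the number of $u$-letters at positions $\leq i$, then $y$ belongs to the $k$-th block $b_k$: indeed the leading $u$ of $b_k$ is the $k$-th $u$ of $W_n$ and lies at some position $\leq i$, while the leading $u$ of $b_{k+1}$ lies strictly to the right of $y$. Hence the parent letter of $y$ is $x_k$, which sits at position $k$ in $W_{n-1}$.

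The only subtlety is the convention for "$k$ letters $u$ on its left", which should be read as counting $u$-letters in the prefix ending at $y$ (i.e.\ including $y$ when $y=u$). I would dispose of this in one line by splitting into two cases: if $y=u$, then $y$ is itself the leading $u$ of its block; if $y=v$, then the leading $u$ of the same block lies strictly to the left of $y$. In both cases the count equals the block index $k$. No real obstacle is expected; the argument is a direct structural observation, and one could alternatively phrase it through Proposition~\ref{proposition:fibonacci:word:u} by noting that the $k$-th $u$ of $W_n$ occurs at position $u(k)$, but that machinery is unnecessary here.
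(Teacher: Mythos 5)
Your proof is correct and rests on the same observation as the paper's: each letter of $W_{n-1}$ contributes exactly one $u$ to $W_n$ (as the leading letter of its image block), so counting $u$'s up to $y$ recovers the position of the parent letter. The paper phrases this as a direct count (number of $u$'s up to $y$ equals the number of $u$'s plus the number of $v$'s up to $x$, i.e.\ the position of $x$), while you package it as an explicit bijection between blocks and $u$-occurrences and handle the inclusive-counting convention more carefully, but the argument is essentially identical.
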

\begin{proof}
The number $k$ of letters $u$ to the left of $y$ is equal to the number of letters $u$ to the left of $x$ in $W_{n-1}$ (by $u \rightarrow uv$) plus the number of letters $v$ to the left of $x$ in $W_{n-1}$ (by $v \rightarrow u$): this is exactly the position of $x$ in $W_{n-1}$.
\end{proof}

The following two technical lemmas will be used in Section \ref{section:representation}.

\begin{lemma}\label{lemma:uG}
Let $\mathbf{G}$ be a Fibonacci sequence. For any $i \in \mathbb{Z}^*$ there exists $n_1 \in \mathbb{N}$ such that $n \geqslant n_1$ implies $u(i + G_{n}) = u(i) + G_{n+1}$.
\end{lemma}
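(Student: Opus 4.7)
The plan is to reduce the identity to a statement about floor functions via the Beatty representation (\ref{equation:beatty}), and then to control the gap between $G_n\varphi$ and the integer $G_{n+1}$ using Binet's formula. Writing $G_n = A\varphi^n + B\psi^n$ with $\psi = -1/\varphi$ and $A,B \in \mathbb{R}$ determined by $G_0, G_1$, a direct expansion gives
\[
\epsilon_n := G_n\varphi - G_{n+1} = B(\varphi - \psi)\psi^n = B\sqrt{5}\,\psi^n,
\]
an expression that tends to $0$ since $|\psi| < 1$.

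Next I would dispose of the trivial case where $\mathbf{G}$ is identically zero (both sides of the claim equal $u(i)$). Otherwise $A \neq 0$: indeed, $A = 0$ would force $b = a\psi$, which, with $a, b \in \mathbb{Z}$ and $\psi$ irrational, gives $a = b = 0$. Hence $|G_n| \to \infty$, so for $n$ large enough one has $i + G_n \neq 0$ and (\ref{equation:beatty}) applies at both $i$ and $i + G_n$. The target identity then unfolds as
\[
u(i+G_n) = \lfloor (i+G_n)\varphi \rfloor = \lfloor i\varphi + G_{n+1} + \epsilon_n\rfloor = G_{n+1} + \lfloor i\varphi + \epsilon_n\rfloor,
\]
using that $G_{n+1}$ is an integer and may be pulled outside the floor.

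The final step is to remove $\epsilon_n$ from the floor. Since $i \in \mathbb{Z}^*$ and $\varphi$ is irrational, $i\varphi$ is irrational, hence sits at some positive distance $d_i$ from $\mathbb{Z}$. Choosing $n_1$ so that $|\epsilon_n| < d_i$ and $|G_n| > |i|$ for all $n \geq n_1$, the perturbation cannot cross an integer threshold, so $\lfloor i\varphi + \epsilon_n\rfloor = \lfloor i\varphi\rfloor = u(i)$, giving the claimed equality.

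The only genuinely delicate point is this last step: guaranteeing that the small real perturbation $\epsilon_n$ cannot push $i\varphi$ past an integer. Irrationality of $i\varphi$ is what makes this work, and it is the one place the proof uses more than the Fibonacci recursion. Everything else is Binet bookkeeping and a routine verification that (\ref{equation:beatty}) is applicable for $n$ sufficiently large.
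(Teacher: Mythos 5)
Your proof is correct and follows essentially the same route as the paper's: both reduce the identity via the Beatty formula (\ref{equation:beatty}) to showing $\lfloor i\varphi + \epsilon_n \rfloor = \lfloor i\varphi \rfloor$ where $\epsilon_n = G_n\varphi - G_{n+1} \to 0$ (you establish the decay via Binet, the paper via the recursion $\Delta_n = -\frac{1}{\varphi}\Delta_{n-1}$), and both conclude from the fact that $i\varphi$ is irrational, hence at positive distance from $\mathbb{Z}$. Your extra verification that $i + G_n \neq 0$ for large $n$, so that (\ref{equation:beatty}) is actually applicable at $i+G_n$, is a detail the paper silently omits.
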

\begin{proof}
Let $\Delta_n = G_{n}\varphi - G_{n+1}$.
By (\ref{equation:beatty}), 
\begin{displaymath}
u(i + G_{n}) - G_{n+1} = \lfloor i\varphi + G_{n}\varphi \rfloor - G_{n+1} = \lfloor i\varphi + G_{n}\varphi - G_{n+1} \rfloor = \lfloor i\varphi + \Delta_n \rfloor. 
\end{displaymath}
We know that $\Delta_{n} = -\frac{1}{\varphi}\Delta_{n-1}$, so that $\Delta_n \rightarrow 0$ with alternating sign: $\epsilon > 0$ being given, there exists $n_1$ such that $n \geqslant n_1$ implies $-\epsilon < \Delta_n < \epsilon$. Then $i\varphi - \epsilon < i\varphi + \Delta_n < i\varphi + \epsilon$. Choosing $\epsilon = \inf(i\varphi - \lfloor i\varphi \rfloor, \lfloor i\varphi \rfloor + 1 - i\varphi)$, gives $\lfloor i\varphi \rfloor < i\varphi + \Delta_n < \lfloor i\varphi \rfloor + 1$. We obtain $u(i + G_{n}) - G_{n+1} = \lfloor i\varphi \rfloor = u(i)$.
\end{proof}

Before proceeding to the next lemma, let us recall the analysis of Brother U. Alfred \cite{Brother U Alfred 1963}. Any nonzero Fibonacci sequence $\mathbf{G}$ has two parts: the monotonic part going to the right, where the terms are of constant sign, and the alternating part on the left where the signs alternate. For a positive sequence, the separation between the parts occurs at the place where consecutive terms are $d-c$, $c$, $d$ with $d-c > c$ and $c < d$: $c$ is the smallest nonnegative term of the sequence, and the term previous to $d-c$ is negative. For a negative sequence, $c$ is the largest nonpositive term of the sequence, $d-c < c$, $c > d$, and the term previous to $d-c$ is positive. Let us call the rank $\nu$ of $c$ the \textit{reference index} of the sequence. 

To summarize, the reference index $\nu = \nu(\mathbf{G})$ is such that when $\mathbf{G}$ is positive, $G_{\nu-1} > G_{\nu} \geqslant 0$, and when $\mathbf{G}$ is negative, $G_{\nu-1} < G_{\nu} \leqslant 0$.

\begin{lemma}\label{lemma:uG0}
Let $\mathbf{G}$ be a nonzero Fibonacci sequence with reference index $\nu$. Then there exists a unique $n \geqslant \nu$ such that $u(i) + G_{n+1} = 0$ for $i = 1 - G_n$.
\end{lemma}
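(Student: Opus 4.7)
The plan is to convert the equation $u(1-G_n) + G_{n+1} = 0$ into a clean condition on the quantity $\Delta_n = G_n\varphi - G_{n+1}$ from the proof of Lemma \ref{lemma:uG}, and then read off existence and uniqueness from its geometric behaviour $\Delta_{n+1} = -\Delta_n/\varphi$.

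First I would apply the Beatty formula (\ref{equation:beatty}) for $i = 1 - G_n \neq 0$: writing $u(1-G_n) = \lfloor \varphi - G_n\varphi \rfloor$ and pulling the integer $G_{n+1}$ inside the floor turns the equation into $\lfloor \varphi - \Delta_n \rfloor = 0$, i.e.\ $\Delta_n \in (\varphi - 1, \varphi]$. The two endpoints are handled using irrationality of $\varphi$: $\Delta_n = \varphi - 1$ forces $G_n = G_{n+1} = 1$, which is the excluded case $1 - G_n = 0$ where Beatty breaks down but the equation still holds via $u(0) = -1$; and $\Delta_n = \varphi$ forces $(G_n, G_{n+1}) = (1, 0)$, for which the equation fails. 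The net outcome is that $u(1-G_n) + G_{n+1} = 0$ is equivalent to $\Delta_n \in [\varphi - 1, \varphi)$.

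Next I would use the reference index to control $\Delta_\nu$. Setting $c = G_\nu$ and $d = G_{\nu+1}$, the characterization of $\nu$ yields $d \geq 2c + 1$ when $\mathbf{G}$ is positive ($c \geq 0$), and $d \leq 2c - 1$ when $\mathbf{G}$ is negative ($c \leq 0$). A short calculation using $\varphi < 2$ then gives $\Delta_\nu < 0$ and $\Delta_{\nu+1} = (d - c\varphi)/\varphi \geq \varphi - 1$ in the positive case, and $\Delta_\nu \geq 1 > \varphi - 1$ in the negative case.

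Finally, the recurrence $\Delta_{n+1} = -\Delta_n/\varphi$ alternates signs, so the positive values among $\{\Delta_n\}_{n \geq \nu}$ form a geometric subsequence of ratio $1/\varphi^2$ starting from a term $\geq \varphi - 1$ and decreasing to $0$, while negative $\Delta_n$ cannot lie in $[\varphi - 1, \varphi)$. Since the target interval has ratio exactly $\varphi/(\varphi - 1) = \varphi^2$, this geometric subsequence meets it in precisely one term, which gives the unique $n \geq \nu$. The main obstacle is Step~1: reconciling the Beatty formula, which fails at $i = 0$, with the exceptional value $u(0) = -1$ so that the two regimes glue cleanly into the single condition $\Delta_n \in [\varphi - 1, \varphi)$; once this is established, the geometric counting is immediate.
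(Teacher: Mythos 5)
Your proposal is correct and follows the same core strategy as the paper's proof: apply the Beatty formula to rewrite $u(1-G_n)+G_{n+1}$ as a floor of $\varphi$ shifted by the quantity $\Delta_n$, observe that $\Delta_{n+1}=-\Delta_n/\varphi$ so that $|\Delta_n|$ decays geometrically with ratio $1/\varphi^2$ while the target interval has multiplicative width exactly $\varphi^2$ (whence uniqueness), and use the defining inequalities of the reference index $\nu$ to show the relevant term at index $\nu$ or $\nu+1$ already lies at or beyond the far end of the interval (whence existence with $n\geqslant\nu$). The one genuine difference is your endpoint analysis, and it is a real improvement: the paper works with the strict condition $-\varphi<\Delta_n<-1/\varphi$ (in its sign convention) and applies $u(i)=\lfloor i\varphi\rfloor$ without excluding $i=0$, where that formula fails since $u(0)=-1$. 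This matters precisely when $(G_n,G_{n+1})=(1,1)$ or $(1,0)$, i.e.\ when $\mathbf{G}$ is equivalent to $\mathbf{F}$ itself: for $\mathbf{G}=\mathbf{F}$ one has $\Delta_1=F_2-F_1\varphi=-1/\varphi$ exactly on the boundary, no index satisfies the paper's strict inequalities, and the lemma's unique $n=1$ is detected only through $u(0)+F_2=-1+1=0$. Your half-open interval $[\varphi-1,\varphi)$, obtained by checking that the endpoint $\varphi-1$ corresponds to the case where the equation holds via $u(0)=-1$ while the endpoint $\varphi$ corresponds to the case where it fails, glues the two regimes into a single correct criterion and thereby closes this gap; the rest of your counting argument then goes through verbatim.
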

\begin{proof}
Let $\Delta_n = G_{n+1} - G_n\varphi$. For $i = 1 - G_n$, we write 
\begin{displaymath}
u(i) + G_{n+1} = \lfloor i\varphi + G_{n+1} \rfloor = \lfloor \varphi - G_n\varphi + G_{n+1} \rfloor = \lfloor \varphi + \Delta_n \rfloor. 
\end{displaymath}
As $\Delta_n \rightarrow 0$ and is bracketed by bounds of disjoint intervals, there exists a unique $n$ such that $-\varphi < \Delta_{n} < -\frac{1}{\varphi}$. Then $0 < \varphi + \Delta_{n} < 1$, and we obtain $u(i) + G_{n+1} = 0$. The conditions $-\varphi < \Delta_{n} < -\frac{1}{\varphi}$ and $u(i) + G_{n+1} = 0$ are equivalent. To prove that $n \geqslant \nu$, we have to check that for $m <\nu$, $\Delta_m$ is outside the appropriate bounds. As $|\Delta_m|$ increases with decreasing $m$, we need only consider the case $m = \nu-1$. For $\mathbf{G}$ positive, by the property of $\nu$, we have $G_{\nu-1} \geqslant G_{\nu} + 1$, and
\begin{displaymath}
\Delta_{\nu-1} = G_{\nu} - G_{\nu-1}\varphi \leqslant G_{\nu} - G_{\nu}\varphi - \varphi = -\frac{G_{\nu}}{\varphi} - \varphi \leqslant -\varphi.
\end{displaymath}
For $\mathbf{G}$ negative, a similar analysis shows that $\Delta_{\nu-1} \geqslant \varphi$.
\end{proof}

\section{Integer labeling of the Fibonacci tree}\label{section:labels}

For $(a,b) \in \mathbb{Z}^2$, the \textit{labeled Fibonacci tree} $\mathcal{F}^{a,b}$ is the Fibonacci tree (Fig. \ref{figure:fibtree}) with $u$-nodes and $v$-nodes labeled according to the following rules:

\begin{itemize}
\item The root is a $u$-node labeled $a$. Its child nodes are a $u$-node labeled $b-1$ and a $v$-node labeled $b$. 
\item The child nodes of a $u$-node labeled $y$, whose parent node is labeled $x$, are a $u$-node labeled $x+y-1$ and a $v$-node labeled $x+y$.
\item The child node of a $v$-node labeled $t$, whose parent node is labeled $z$, is a $u$-node labeled $z+t$. 
\end{itemize}
The labeled Fibonacci tree $\mathcal{F}^{0,1}$ is displayed in Figure \ref{figure:fibtree:F01}. The labels are read from the root and from left to right. By construction,
\begin{itemize}
\item The labels of child nodes of $u$-nodes are consecutive integers.
\item Ascending branches of the tree, where $u$-nodes and $v$-nodes alternate, are labeled according to the Fibonacci recursion.
\end{itemize}

\begin{figure}[ht]
\centering
\includegraphics[scale = 0.44]{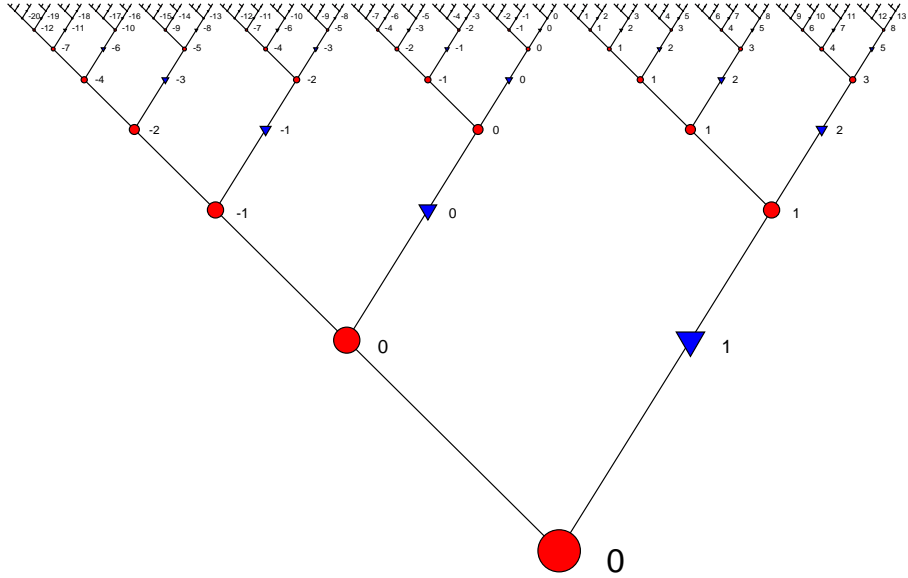}
\caption{The labeled Fibonacci tree $\mathcal{F}^{0,1}$.}
\label{figure:fibtree:F01}
\end{figure}

At level $n$, a labeled Fibonacci tree has $F_{n+2}$ nodes. Let $A_n$ and $B_n$ be the leftmost and rightmost labels at level $n$. From the labeling rules, we have the sequence $\mathbf{B}: a, b, a+b, \ldots$, so that
\begin{displaymath}
B_n = F^{a,b}_n.
\end{displaymath}
The leftmost node is a $u$-node, and its label $A_n$ is given by the sequence $\mathbf{A}: a, b-1, a+b-2, \ldots$, with
\begin{displaymath}
A_n = A_{n-1} + A_{n-2} - 1, \text{ for } n \ge 2.
\end{displaymath}
Using this expression, it is checked by induction that
\begin{displaymath}
A_n = F^{a,b}_n - F_{n+2} + 1.
\end{displaymath}

For $x,y \in \mathbb{Z}$, $x \leqslant y$, $\llbracket x \cdots y \rrbracket$ denotes the interval of consecutive integers from $x$ to $y$, and we use the notation
\begin{displaymath}
\llbracket x+z \cdots y+z \rrbracket = \llbracket x \cdots y \rrbracket + z.
\end{displaymath}
It is possible to add or substract two intervals of the same length:
\begin{displaymath}
\begin{array}{c c c}
\llbracket x \cdots y \rrbracket + \llbracket x' \cdots y' \rrbracket &=& \llbracket x+x' \cdots y+y' \rrbracket,\\
\llbracket x \cdots y \rrbracket - \llbracket x' \cdots y' \rrbracket &=& \llbracket x-x' \cdots y-y' \rrbracket.
\end{array}
\end{displaymath}

We denote
\begin{displaymath}
\mathcal{L}^{a,b}_n = \llbracket A_n \cdots B_n \rrbracket = \llbracket F^{a,b}_n - F_{n+2} + 1 \cdots F^{a,b}_n \rrbracket = \llbracket -F_{n+2} + 1 \cdots 0 \rrbracket + F^{a,b}_n. 
\end{displaymath}
As $F^{0,0}_n = 0$ for all $n$, we have 
\begin{displaymath}
\mathcal{L}^{0,0}_n = \llbracket -F_{n+2} + 1 \cdots 0 \rrbracket,
\end{displaymath}
so that
\begin{displaymath}
\mathcal{L}^{a,b}_n = \mathcal{L}^{0,0}_n + F^{a,b}_n. 
\end{displaymath}

\begin{theorem}\label{theorem:label:Fab}
At level $n$ of the tree $\mathcal{F}^{a,b}$, the labeling is made of consecutive integers in the interval $\llbracket F^{a,b}_n - F_{n+2} + 1 \cdots F^{a,b}_n \rrbracket$.
\end{theorem}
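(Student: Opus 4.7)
The plan is to prove a stronger statement by strong induction on $n$: at level $n$ of $\mathcal{F}^{a,b}$, the labels read from left to right form the strictly increasing sequence of consecutive integers $A_n, A_n+1, \ldots, B_n$. The strengthening — that the ordering is left-to-right increasing, not merely that the multiset of labels is consecutive — is what makes the induction close. The base cases $n=0$ and $n=1$ are direct from the labeling rules.

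For the inductive step at level $n \geqslant 2$, the already-established values $A_n$ (leftmost label) and $B_n = F^{a,b}_n$ (rightmost label) pin down the two endpoints of the putative interval, so it suffices to show that any two labels that are adjacent when reading left to right at level $n$ differ by exactly $1$. Two such labels either come from the same parent at level $n-1$ or from two adjacent parents $p_1, p_2$ at level $n-1$, with labels $y_1, y_2$ and respective grandparent labels $x_1, x_2$ at level $n-2$. In the same-parent case, the parent is necessarily a $u$-node (a $v$-node has only one child), and its two children are labeled $x+y-1$ and $x+y$ by the labeling rules, hence differ by $1$.

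For the two-parent case, I would use Proposition \ref{proposition:fibonacci:tree:structure} to identify the types of $p_1, p_2$ with the corresponding letters of $W_{n-1}$. The possible adjacent pairs are $uv$, $vu$, and $uu$: the pattern $vv$ cannot occur, because every $v$ in $W_{n-1}$ arises from a substitution $u \to uv$ and is therefore immediately preceded by its sibling $u$. Parsing $W_{n-1}$ as a concatenation of blocks produced by the letters of $W_{n-2}$ (a block $uv$ from each letter $u$, a block $u$ from each letter $v$), the internal step of a $uv$-block yields a $uv$ adjacency whose two letters share the same parent in $W_{n-2}$, while the block boundaries ($vu$ and $uu$ transitions) correspond to distinct adjacent parents in $W_{n-2}$. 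Thus $x_1 = x_2$ in the $uv$ case, and $x_2 = x_1 + 1$ in the $vu$ and $uu$ cases by the inductive hypothesis applied at level $n-2$. Combined with $y_2 = y_1 + 1$ from the inductive hypothesis at level $n-1$, a brief computation using the labeling rules shows in each of the three cases that the last child of $p_1$ and the first child of $p_2$ differ by exactly $1$.

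The main obstacle is not arithmetic but the structural claim about how adjacencies in $W_{n-1}$ lift to adjacencies (or coincidences) of parents in $W_{n-2}$. This claim follows from tracking the substitutions $u \to uv$ and $v \to u$ block by block, and it is precisely where the stronger left-to-right-ordered form of the induction hypothesis at level $n-2$ becomes essential: the mere fact that labels at level $n-2$ form a consecutive set is not enough — one needs to know that left-adjacent nodes carry consecutive labels in that order.
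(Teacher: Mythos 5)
Your proposal is correct and follows essentially the same route as the paper: induction on $n$ using the hypothesis at levels $n-1$ and $n-2$, a case analysis on the type ($uu$, $uv$, $vu$, with $vv$ excluded) of adjacent parent nodes and whether their own parents coincide or are adjacent, and the same arithmetic with the labeling rules. The strengthened left-to-right-increasing form of the hypothesis that you make explicit is also used implicitly in the paper's proof (it takes the node next to a node labeled $y$ to be labeled $y+1$), so there is no substantive difference.
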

\begin{proof}
We use induction on $n$. The result is true for $ n \le 2$, and for $n>2$, let us consider a node $Q$ labeled $y$ at level $n-1$, and that is not a rightmost node. By induction hypothesis, the labelings at levels $n-1$ and $n-2$ are made of consecutive integers. Hence, the node $Q'$ next to $Q$ at level $n-1$ has label $y+1$. Let us assume that the parent node of $Q$, $P$ at level $n-2$, has label $x$, and that the parent node of $Q'$ is $P'$. We consider all the different cases implied by the topology of the tree:
\begin{enumerate}
\item $Q$ is a $u$-node and $Q'$ is a $u$-node. The only configuration is that the parent node $P$ is a $v$-node and $P'$ is a $u$-node next to $P$, that has label $x+1$. By the labeling rules, the two child nodes of $Q$, at level $n$, have labels $(x+y-1,x+y)$, and two child nodes of $Q'$ have labels $((x+1) + (y+1)-1,(x+1)+(y+1)) = (x+y+1,x+y+2)$.
\item $Q$ is a $u$-node and $Q'$ is a $v$-node. Then $Q$ and $Q'$ have the same parent node $P$, a $u$-node labeled $x$. At level $n$, the two child nodes of $Q$ have labels $(x+y-1,x+y)$ and the single child node of $Q'$ has label $x+y+1$.
\item $Q$ is a $v$-node and $Q'$ is a $u$-node. Then $P$ and $P'$ are consecutive nodes with labels $x$ and $x+1$ ($P$ is a $u$-node and $P'$ is a $v$-node). The single child node of $Q$ has label $x+y$ and the two child nodes of $Q'$ have labels $((x+1)+y,(x+1)+(y+1)) = (x+y+1,x+y+2)$. 
\end{enumerate}
In all cases, the labels at level $n$ are consecutive integers.  
\end{proof}

\begin{corollary}\label{corollary:label:node}
The label $y$ of a node at level $n$ of the tree is given by the Wythoff sequences: $y = A_n-1 + u(k)$ if the node is a $u$-node, where $k$ is the number of $u$-nodes to the left of that node, and $y = A_n-1 + v(l)$ if the node is a $v$-node, where $l$ is the number of $u$-nodes to the left of the parent $u$-node of that node.
\end{corollary}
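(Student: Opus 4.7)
The plan is to combine the structural identification of the tree with the Fibonacci word (Proposition~\ref{proposition:fibonacci:tree:structure}), the positional formula in the word (Proposition~\ref{proposition:fibonacci:word:u}), and the interval statement of the labels (Theorem~\ref{theorem:label:Fab}). All the ingredients are already in place, so the proof is essentially a bookkeeping exercise.

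First I would use Proposition~\ref{proposition:fibonacci:tree:structure} to identify, from left to right, the nodes at level $n$ with the letters of $W_n$. By Theorem~\ref{theorem:label:Fab} the labels at level $n$ form the consecutive interval $\llbracket A_n \cdots B_n \rrbracket$, so the node appearing in position $i$ (counted from $1$ on the left) carries the label
\begin{displaymath}
A_n + (i-1) = A_n - 1 + i.
\end{displaymath}
Thus the task reduces to reading off the position $i$ of the corresponding letter in $W_n$.

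For a $u$-node, Proposition~\ref{proposition:fibonacci:word:u} gives directly $i = u(k)$, where $k$ is the number of letters $u$ in $W_n$ up to and including that occurrence, that is, the number of $u$-nodes at level $n$ to the left of and including that node. Substituting yields the first formula $y = A_n - 1 + u(k)$.

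For a $v$-node, Proposition~\ref{proposition:fibonacci:word:u} gives $i = v(l)$, where $l$ is the number of letters $u$ up to (and including) the letter $u$ of $W_{n-1}$ whose substitution $u \rightarrow uv$ produced this $v$. The one point to verify—and the only place one has to be a bit careful—is that this generating letter $u$ corresponds exactly to the parent $u$-node in the tree. This is immediate from the construction, since the tree is built by applying the substitutions~(\ref{fibonacci:scheme}) level by level: the $v$ produced by $u \rightarrow uv$ is attached as the child of precisely that $u$-node. Hence $l$ counts the $u$-nodes at level $n-1$ up to and including the parent $u$-node, and we obtain $y = A_n - 1 + v(l)$, completing the proof.
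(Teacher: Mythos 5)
Your proposal is correct and follows essentially the same route as the paper's proof: identify the level-$n$ nodes with the letters of $W_n$ via Proposition~\ref{proposition:fibonacci:tree:structure}, use Theorem~\ref{theorem:label:Fab} to write the label as position plus the offset $A_n-1$, and read off the position from Proposition~\ref{proposition:fibonacci:word:u}. Your extra remark that the generating letter $u$ in $W_{n-1}$ is exactly the parent $u$-node in the tree is a detail the paper leaves implicit, and your reading of ``to the left of'' as ``up to and including'' matches the paper's own usage in Example~\ref{example:label}.
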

\begin{proof}
The pattern of $u$-nodes and $v$-nodes nodes at level $n$ is described by the Fibonacci word $W_n$ (Proposition \ref{proposition:fibonacci:tree:structure}). According to Theorem \ref{theorem:label:Fab}, the label of a node at this level is equal to its position plus the offset $A_n - 1$. Proposition \ref{proposition:fibonacci:word:u} now gives the result. 
\end{proof}

\begin{corollary}\label{corollary:label:parent}
Let $Q$ be a node at level $n$. The label of the parent node of $Q$ is $x = A_{n-1}-1 + k$ where $k$ is the number of $u$-nodes to the left of $Q$.
\end{corollary}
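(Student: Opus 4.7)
The plan is to chain together two results already in hand: Theorem~\ref{theorem:label:Fab}, which tells us the labels at level $n-1$ form the consecutive interval $\llbracket A_{n-1} \cdots B_{n-1} \rrbracket$, and Proposition~\ref{proposition:fibonacci:word:parent}, which locates the parent letter inside $W_{n-1}$ by counting $u$-letters to the left of a given letter in $W_n$.

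First I would observe that by Proposition~\ref{proposition:fibonacci:tree:structure} the pattern of $u$- and $v$-nodes at level $m$ is exactly the Fibonacci word $W_m$, so nodes at level $m$ are indexed by positions $1, 2, \ldots, F_{m+2}$ in $W_m$. Combined with Theorem~\ref{theorem:label:Fab}, the node at position $p$ of $W_{m}$ carries label $A_{m}-1+p$, since the labels at level $m$ run through consecutive integers starting from $A_m$.

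Next, let $Q$ be a node at level $n$, sitting at some position in $W_n$, and let $k$ be the number of $u$-nodes strictly to the left of $Q$. Proposition~\ref{proposition:fibonacci:word:parent}, applied to the letter at the position of $Q$ in $W_n$, asserts that the parent letter (the one in $W_{n-1}$ whose substitution produced $Q$) lies at position $k$ of $W_{n-1}$.

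Finally I would combine these two observations: the parent node of $Q$, being at position $k$ of $W_{n-1}$, carries the label $A_{n-1} - 1 + k$, which is exactly the claim. There is no real obstacle here; the only thing to be careful about is that the $k$ appearing in Proposition~\ref{proposition:fibonacci:word:parent} counts $u$-letters to the left of the letter in $W_n$, which matches the stated count of $u$-nodes to the left of $Q$ in the tree under the correspondence of Proposition~\ref{proposition:fibonacci:tree:structure}.
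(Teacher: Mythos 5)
Your proposal is correct and follows exactly the paper's argument: the label of a node at level $n-1$ is its position in $W_{n-1}$ plus the offset $A_{n-1}-1$ (from Theorem~\ref{theorem:label:Fab} together with Proposition~\ref{proposition:fibonacci:tree:structure}), and Proposition~\ref{proposition:fibonacci:word:parent} identifies that position as $k$. You have merely spelled out the details that the paper's two-line proof leaves implicit.
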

\begin{proof}
The label of the parent node at level $n-1$ is equal to its position plus the offset $A_{n-1} - 1$, and we use Proposition \ref{proposition:fibonacci:word:parent}.
\end{proof}

\begin{example}\label{example:label}
In the tree $\mathcal{F}^{0,1}$ (Fig. \ref{figure:fibtree:F01}), consider the $u$-node $Q$ with label $y = -2$ at level 5. The leftmost label is $-7$ and there are $k = 4$ $u$-nodes to the left of $Q$. We check that $y = -7 - 1 + u(4) = -8 + 6 = -2$. The parent node $P$ of $Q$ has label $x = -4 - 1 + 4 = -1$. The child $v$-node $R$ of $Q$ has label $z = -12 - 1 + v(4) = -13 + 10 = -3$.  
\end{example}

\section{The Hofstadter tree}\label{section:wythoff}

In this section, the labeled tree $\mathcal{F}^{1,2}$ is considered (Fig. \ref{figure:fibtree:F12}). For this tree, the sequences of leftmost and rightmost labels are
\begin{displaymath}
\begin{array}{l l}
\mathbf{A}: &1, 1, 1, 1, \ldots,\\
\mathbf{B}: &1, 2, 3, 5, 8, \ldots, \quad B_n = F_{n+2}.
\end{array}
\end{displaymath}

\begin{proposition}\label{proposition:label:F12}
At level $n$ of the tree $\mathcal{F}^{1,2}$, the labeling is $\llbracket 1 \cdots F_{n+2} \rrbracket$.
\end{proposition}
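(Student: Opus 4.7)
The plan is to derive the proposition as an immediate specialization of Theorem \ref{theorem:label:Fab} to the case $(a,b) = (1,2)$. Since that theorem already asserts that the labels at level $n$ of $\mathcal{F}^{a,b}$ fill the interval $\llbracket F^{a,b}_n - F_{n+2} + 1 \cdots F^{a,b}_n \rrbracket$, the only real work is to identify the two endpoints for $(a,b) = (1,2)$.

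First I would compute $F^{1,2}_n$ using formula (\ref{equation:Fab}). Substituting $a = 1$, $b = 2$ gives
\begin{displaymath}
F^{1,2}_n = F_{n-1} + 2F_n = (F_{n-1} + F_n) + F_n = F_{n+1} + F_n = F_{n+2},
\end{displaymath}
so the rightmost label is $B_n = F^{1,2}_n = F_{n+2}$. This also matches the stated sequence $\mathbf{B}: 1, 2, 3, 5, 8, \ldots$ As a cross-check on the leftmost label, one can either invoke the closed form $A_n = F^{1,2}_n - F_{n+2} + 1 = 1$ from Section \ref{section:labels}, or verify $\mathbf{A}: 1, 1, 1, \ldots$ directly by induction from $A_0 = a = 1$, $A_1 = b-1 = 1$, and $A_n = A_{n-1} + A_{n-2} - 1$, which clearly preserves the constant value $1$.

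Plugging these endpoints into the interval from Theorem \ref{theorem:label:Fab} yields
\begin{displaymath}
\llbracket F^{1,2}_n - F_{n+2} + 1 \cdots F^{1,2}_n \rrbracket = \llbracket 1 \cdots F_{n+2} \rrbracket,
\end{displaymath}
which is precisely the claim. No step here is genuinely difficult; the proposition is essentially a one-line corollary, and the only point requiring a moment's care is the arithmetic identity $F^{1,2}_n = F_{n+2}$, which is immediate from the definition of the Fibonacci sequence.
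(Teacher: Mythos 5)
Your proof is correct and takes the same route as the paper, which simply declares the proposition immediate from Theorem \ref{theorem:label:Fab}; you have merely filled in the one-line computation $F^{1,2}_n = F_{n-1} + 2F_n = F_{n+2}$ that makes the specialization explicit.
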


\begin{proof}
This is immediate from Theorem \ref{theorem:label:Fab}.
\end{proof}

In the tree $\mathcal{F}^{1,2}$, the subtree at the right of the root (Fig. \ref{figure:fibtree:F12}) is the \textit{Hofstadter tree}. The main ascending branch of the Hofstadter tree is the Fibonacci sequence $\mathbf{F}^{1,2} \sim \mathbf{F}^{0,1}$, the second ascending branch is the Lucas sequence $4, 7, 11, 18, \ldots \sim \mathbf{F}^{2,1}$.

\begin{figure}[ht]
\centering
\includegraphics[scale = 0.44]{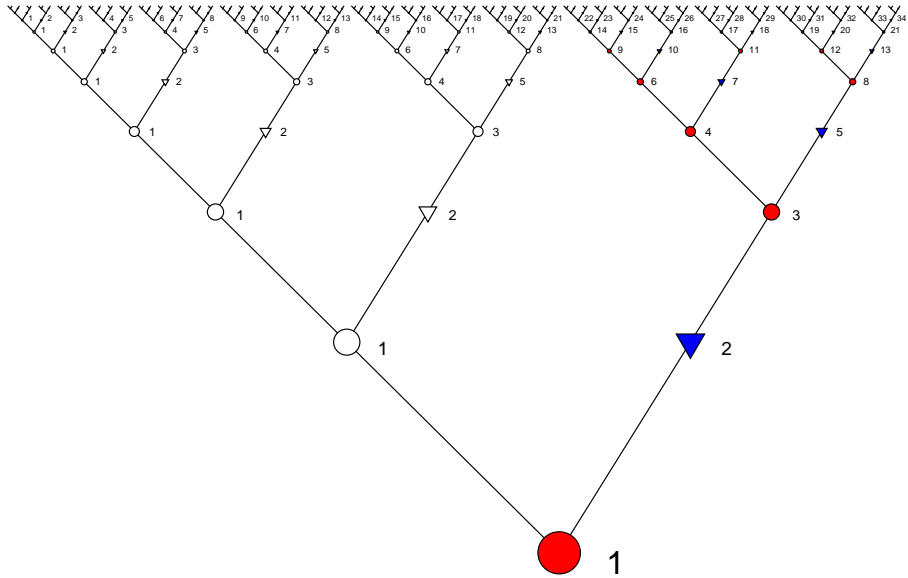}
\caption{The labeled Fibonacci tree $\mathcal{F}^{1,2}$ is made of infinitely many juxtaposed copies of the Hofstadter tree, highlighted on the right.}
\label{figure:fibtree:F12}
\end{figure}

\begin{theorem}\label{theorem:label:Wtree}
Reading the labels of the Hofstadter tree from the root produces the sequence of positive integers.
\end{theorem}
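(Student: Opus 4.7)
The plan is to combine Proposition \ref{proposition:label:F12}, which identifies level $n$ of $\mathcal{F}^{1,2}$ with the consecutive integers $1,2,\ldots,F_{n+2}$ arranged left-to-right, with a self-similarity observation that picks out the rightmost $F_n$ labels at each level as the Hofstadter content.

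The first step is to show that the subtree of $\mathcal{F}^{1,2}$ rooted at the left child of the root is again a labeled copy of $\mathcal{F}^{1,2}$. This left child is a $u$-node labeled $b-1 = 1$ whose parent is labeled $a = 1$, so the labeling rules at a $u$-node --- which only depend on its label and its parent's label --- drive the descendants by the same recurrence and initial data as those of the root of $\mathcal{F}^{1,2}$. Consequently the left copy occupies the leftmost $F_{n+1}$ positions at level $n$ of $\mathcal{F}^{1,2}$ and, by Proposition \ref{proposition:label:F12} applied to this copy, carries labels $\llbracket 1 \cdots F_{n+1} \rrbracket$. Pattern-wise this matches the decomposition $W_n = W_{n-1}W_{n-2}$ of the Fibonacci word, the left factor $W_{n-1}$ coming from the left copy. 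By subtraction, the right subtree of the root --- which together with the root constitutes the Hofstadter tree --- contributes the remaining $F_{n+2}-F_{n+1} = F_n$ rightmost nodes at each level $n \geqslant 1$, whose labels must therefore form the consecutive block $\llbracket F_{n+1}+1 \cdots F_{n+2} \rrbracket$.

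Read level by level, the Hofstadter tree then displays the intervals $\{1\}, \{2\}, \{3\}, \llbracket 4 \cdots 5 \rrbracket, \llbracket 6 \cdots 8 \rrbracket, \llbracket 9 \cdots 13 \rrbracket, \ldots$, which are disjoint, adjacent, and exhaust $\mathbb{N}^*$; taking the nodes top-down and left-to-right within each level produces $1,2,3,4,5,\ldots$ as required. The only delicate point is the self-similarity of the left subtree: it is specific to $(a,b) = (1,2)$, since only this choice makes the leftmost-path invariant $A_n = 1$ hold and thereby lets the recursion close back onto the same initial data at the next level. Once that invariance is in hand, the remainder of the argument is bookkeeping with Fibonacci numbers and the interval calculus recalled just before Theorem \ref{theorem:label:Fab}.
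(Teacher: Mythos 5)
Your proof is correct and follows essentially the same route as the paper, which simply invokes Proposition \ref{proposition:label:F12} together with a figure showing that the Hofstadter subtree occupies the rightmost $F_n$ positions at level $n$ and hence carries the labels $\llbracket F_{n+1}+1 \cdots F_{n+2} \rrbracket$. You have merely written out in full (via the self-similarity of the left subtree and the decomposition $W_n = W_{n-1}W_{n-2}$) the bookkeeping that the paper delegates to its illustration.
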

\begin{proof}
Using Proposition \ref{proposition:label:F12}, this is clear from Figure \ref{figure:fibtree:proof3}.
\end{proof}

Tognetti et al. \cite{Tognetti 1990} prove the result of Theorem \ref{theorem:label:Wtree} the other way: they first label the Hofstadter tree by consecutive integers from the root, then they show that the labeling is consistent with the Fibonacci generation scheme. 

\begin{figure}[ht]
\centering
\includegraphics[scale = 0.44]{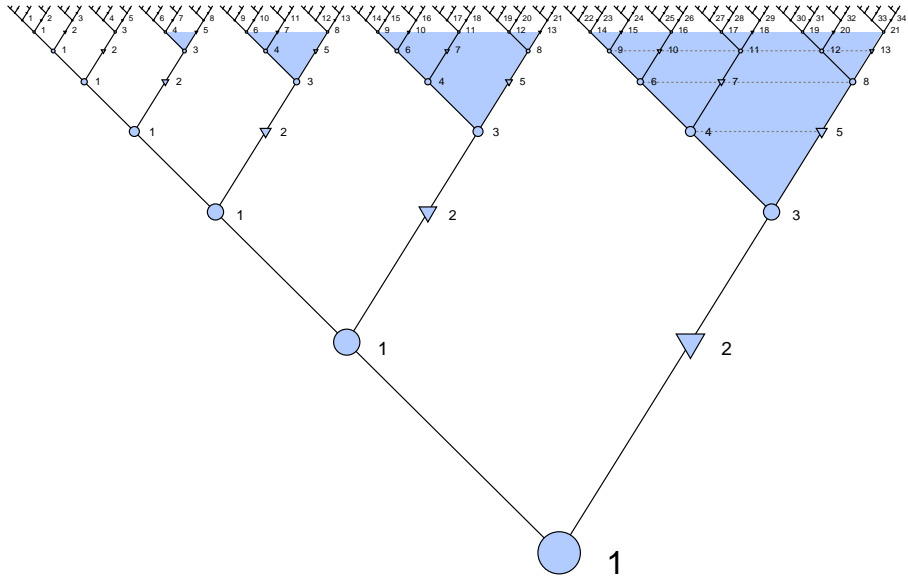}
\caption{Illustration for the proof of Theorem \ref{theorem:label:Wtree}.}
\label{figure:fibtree:proof3}
\end{figure}

\begin{definition}\label{definition:primitive}
In the tree $\mathcal{F}^{a,b}$, a $u$-node is \textit{primitive} if its parent node is a $u$-node. Let this primitive node and its parent node have labels $y$ and $x$ respectively. Then the child $v$-node of the primitive node has label $x+y$, and $(y,x+y)$ is called a \textit{primitive tree-pair}. This pair starts a Fibonacci sequence along the ascending branch rooted at the primitive node. 
\end{definition}

According to the definition, the root node of the tree is a $u$-node that is not primitive.

\begin{proposition}\label{proposition:wythoffarray}
The ascending branches of the Hofstadter tree read from the root form the successive rows of the Wythoff array. 
\end{proposition}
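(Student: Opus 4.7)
The approach is to characterize the primitive $u$-nodes of $\mathcal{F}^{1,2}$ and their primitive tree-pairs using the corollaries of Section \ref{section:labels}, and then to recognize these tree-pairs as the primitive Wythoff pairs $(uu(j),vu(j))$ of Section \ref{section:word}. By Definition \ref{definition:primitive}, each primitive tree-pair initiates a positive Fibonacci sequence along an ascending branch, so this identification will deliver the rows of the Wythoff array directly.

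Concretely, I would apply Corollary \ref{corollary:label:node} to $\mathcal{F}^{1,2}$, where $A_n = 1$ at every level by Proposition \ref{proposition:label:F12}. A $u$-node at level $n$ with $k$ $u$-nodes to its left then has label $y = u(k)$, and by Corollary \ref{corollary:label:parent} its parent carries label $x = k$. This $u$-node is primitive precisely when its parent is a $u$-node, and by Proposition \ref{proposition:fibonacci:word:u}, the letter at position $k$ in $W_{n-1}$ is a $u$ if and only if $k = u(j)$ for some $j \in \mathbb{N}^*$. Hence the primitive $u$-nodes have labels $y = uu(j)$ and parent labels $x = u(j)$, and by equation (\ref{equation:v}) the primitive tree-pair is $(y,x+y) = (uu(j),vu(j))$, exactly the $j$-th primitive Wythoff pair.

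By Definition \ref{definition:primitive}, the ascending branch rooted at this primitive $u$-node is the Fibonacci sequence $\mathbf{F}^{uu(j),vu(j)}$, which is precisely the $j$-th row of the Wythoff array. The root of the Hofstadter tree, a $u$-node labeled $1$ by Theorem \ref{theorem:label:Wtree}, is itself not primitive but initiates the main ascending branch with tree-pair $(1,2) = (uu(1),vu(1))$, corresponding to the first row. Ordering the ascending branches by the label $uu(j)$ of their starting $u$-node, an increasing sequence in $j$, then produces the rows of the Wythoff array in their standard order.

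The main obstacle, which is largely a bookkeeping step, is to reconcile the computation (carried out in the ambient tree $\mathcal{F}^{1,2}$) with the statement about the Hofstadter subtree: one must check that the Hofstadter tree, which by Theorem \ref{theorem:label:Wtree} contains every positive integer exactly once, inherits the parent–child relation from $\mathcal{F}^{1,2}$, so that the node labeled $uu(j)$ in the Hofstadter tree is genuinely primitive there with parent labeled $u(j)$. Once this identification is granted (it is visible in Fig. \ref{figure:fibtree:F12}, where the Hofstadter tree is depicted as an embedded subtree), the analysis above transfers verbatim from $\mathcal{F}^{1,2}$ to the Hofstadter tree and the conclusion follows.
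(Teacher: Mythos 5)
Your proposal is correct and follows essentially the same route as the paper: identify the primitive tree-pairs of $\mathcal{F}^{1,2}$ with the primitive Wythoff pairs $(uu(j),vu(j))$ via the position-equals-label property from Proposition \ref{proposition:label:F12} together with Propositions \ref{proposition:fibonacci:word:u} and \ref{proposition:fibonacci:word:parent} (which your Corollaries \ref{corollary:label:node} and \ref{corollary:label:parent} merely repackage), and handle the non-primitive root branch $(1,2)$ separately. The only difference is that you make explicit the bookkeeping about restricting from $\mathcal{F}^{1,2}$ to the Hofstadter subtree and about the ordering of the rows, which the paper leaves implicit.
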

\begin{proof}
By definition, the first two columns of the Wythoff array contain the primitive Wythoff pairs $\{(uu(i),vu(i)); i \in \mathbb{N}^*\}$ \cite{Silber 1977}, and these pairs are extended by the Fibonacci recursion to form the rows \cite{Morrison 1980} (sequence A035513 in the OEIS \cite{Sloane}). The first Wythoff pair $(1,2)$ corresponds to the first two labels on the main ascending branch of the Hofstadter tree, leading to the Fibonacci sequence. By Proposition \ref{proposition:label:F12}, the label of a given node at level $n$ of the whole tree $\mathcal{F}^{1,2}$ corresponds to the position of the corresponding letter in the Fibonacci word $W_n$ ($W_n$ describes the pattern of $u$-nodes and $v$-nodes by Proposition \ref{proposition:fibonacci:tree:structure}). Let us consider a primitive node on the Hofstadter tree. Its parent node is by definition a $u$-node, whose label is $x = u(i)$ for some $i > 1$ by Proposition \ref{proposition:fibonacci:word:u}. By Proposition \ref{proposition:fibonacci:word:u} again, the primitive node has label $y = u(k)$ where $k$ is the number of $u$-nodes to the left of the node. By Proposition \ref{proposition:fibonacci:word:parent}, $x = k$. Then $y = uu(i)$ is the first term of a primitive Wythoff pair and, by (\ref{equation:v}), $x+y = u(i)+uu(i) = vu(i)$ is the second term of the pair. 
\end{proof}

Structural properties of the Wythoff array \cite{Kimberling 1993} can be read from its representation as the Hofstadter tree.

Theorem \ref{theorem:label:Wtree} recovers the fact that the Wythoff array contains every positive integer exactly once. In section \ref{section:representation}, we shall also recover the fact that the Wythoff array represents all positive Fibonacci sequences, in the sense that any positive Fibonacci sequence is equivalent to a sequence in the array \cite{Morrison 1980}. 

\section{The set of labeled Fibonacci trees}\label{section:set}

On the set of labeled Fibonacci trees, 
\begin{displaymath}
\Phi = \{\mathcal{F}^{a,b};\; (a,b) \in \mathbb{Z}^2\},
\end{displaymath}
the sum $\mathcal{F} \oplus \mathcal{F'}$ of two trees $\mathcal{F}$ and $\mathcal{F'}$ is defined as the labeled tree obtained by superimposing the two trees and adding the labels of the corresponding nodes with a correction term: 
the labeling of the sum at level $n$ is defined by
\begin{displaymath}
\mathcal{L}_n(\mathcal{F} \oplus \mathcal{F'}) = \mathcal{L}_n(\mathcal{F}) + \mathcal{L}_n(\mathcal{F'}) - \mathcal{L}^{0,0}_n,
\end{displaymath}
where $\mathcal{L}^{0,0}_n$ is the interval $\llbracket -F_{n+2} + 1 \cdots 0 \rrbracket$.

\begin{lemma}\label{lemma:converse}
If the Fibonacci tree is labelled at each level $n$ by the interval $\mathcal{L}^{a,b}_n$ then the tree is $\mathcal{F}^{a,b}$. 
\end{lemma}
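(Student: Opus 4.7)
The plan is to identify our labeled tree with $\mathcal{F}^{a,b}$ node-by-node. First I would unpack the hypothesis: at each level $n$, reading the labels from left to right yields the sequence $A_n, A_n+1, \ldots, B_n$ in increasing order (with $A_n = F^{a,b}_n - F_{n+2}+1$ and $B_n = F^{a,b}_n$), so the node at left-to-right position $i$ at level $n$ carries the label $A_n-1+i$.

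Next I would invoke Theorem \ref{theorem:label:Fab}, whose inductive proof establishes the same thing for $\mathcal{F}^{a,b}$ itself: its labels at level $n$ are the consecutive integers $A_n, \ldots, B_n$ arranged left-to-right in increasing order (the inductive step explicitly uses that the label of the node adjacent to one with label $y$ is $y+1$). Hence in $\mathcal{F}^{a,b}$ the node at left-to-right position $i$ of level $n$ also has label $A_n-1+i$. Since the unlabeled Fibonacci tree fixes the left-to-right position and the $u/v$-type of every node, the two labelings agree at every node, so our tree is $\mathcal{F}^{a,b}$.

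A more direct alternative would verify the defining rules of Section \ref{section:labels} from the hypothesis. The root gives $A_0 = a$; at level $1$ one reads off $A_1 = b-1$ (a $u$-node) and $B_1 = b$ (a $v$-node). For the recursive cases, express each relevant parent, node, and child label through the position formulas of Propositions \ref{proposition:fibonacci:word:u} and \ref{proposition:fibonacci:word:parent}, then apply $v(k) = u(k)+k$, the identity $u(v(l)) = u(l) + v(l)$ (which follows structurally by counting letters across the substitutions $u \to uv$, $v \to u$), and the recurrence $A_n = A_{n-1} + A_{n-2} - 1$ to recover the prescribed child labels $x+y-1, x+y$ at a $u$-node and $z+t$ at a $v$-node. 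The main obstacle is not computational but interpretive: once one grants that ``labelled by the interval $\mathcal{L}^{a,b}_n$'' carries the natural left-to-right increasing order, the identification with $\mathcal{F}^{a,b}$ is essentially immediate from previously proved results.
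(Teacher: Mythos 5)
Your first argument is correct and is essentially the paper's own proof: both rest on the observation that labeling each level by the interval $\mathcal{L}^{a,b}_n$ (read left to right) determines every node's label by its position, and that Theorem \ref{theorem:label:Fab} shows $\mathcal{F}^{a,b}$ realizes exactly this labeling, so the two labeled trees coincide. The interpretive point you flag (that the interval is taken in left-to-right increasing order) is likewise implicit in the paper, and your second, rule-verifying alternative is unnecessary given the first.
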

\begin{proof}
We take a Fibonacci tree, label the root $\mathcal{L}^{a,b}_0 = \{a\}$ and the first level $\mathcal{L}^{a,b}_1 = \{b-1,b\}$, and then apply the labeling rules. The tree obtained is $\mathcal{F}^{a,b}$, and it has labeling $\mathcal{L}^{a,b}_n$ at each level $n$ by Theorem \ref{theorem:label:Fab}. Hence the two procedures -- labeling by $\mathcal{L}^{a,b}_n$ at each level $n$ or labeling according to the rules -- lead to the same labeled tree.  
\end{proof}

\begin{theorem}\label{theorem:treesum}
The set $(\Phi,\oplus)$ of labeled Fibonacci trees is a commutative group isomorphic to $(\mathbb{Z}^2,+)$:
\begin{displaymath}
a,b,a',b' \in \mathbb{Z}, \quad \mathcal{F}^{a,b} \oplus \mathcal{F}^{a',b'} = \mathcal{F}^{a+a',b+b'}.
\end{displaymath}
The identity element is the tree $\mathcal{F}^{0,0}$.
\end{theorem}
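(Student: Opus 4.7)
The plan is to reduce everything to the identity $\mathcal{L}^{a,b}_n = \mathcal{L}^{0,0}_n + F^{a,b}_n$ established just before the theorem, together with the linearity of the Fibonacci sequence in its initial data, $F^{a,b}_n + F^{a',b'}_n = F^{a+a',b+b'}_n$, which is immediate from formula (\ref{equation:Fab}). These two facts together will show that the defining rule for $\oplus$ at each level produces exactly the interval $\mathcal{L}^{a+a',b+b'}_n$, and then Lemma \ref{lemma:converse} will identify the resulting tree as $\mathcal{F}^{a+a',b+b'}$.

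First I would compute, level by level, the labeling of $\mathcal{F}^{a,b} \oplus \mathcal{F}^{a',b'}$ directly from the definition:
\begin{displaymath}
\mathcal{L}_n(\mathcal{F}^{a,b} \oplus \mathcal{F}^{a',b'}) = \mathcal{L}^{a,b}_n + \mathcal{L}^{a',b'}_n - \mathcal{L}^{0,0}_n.
\end{displaymath}
Substituting $\mathcal{L}^{a,b}_n = \mathcal{L}^{0,0}_n + F^{a,b}_n$ and similarly for $(a',b')$, the two copies of $\mathcal{L}^{0,0}_n$ collapse against the correction term, leaving $\mathcal{L}^{0,0}_n + F^{a,b}_n + F^{a',b'}_n$; by linearity of $F^{\cdot,\cdot}_n$ this equals $\mathcal{L}^{0,0}_n + F^{a+a',b+b'}_n = \mathcal{L}^{a+a',b+b'}_n$. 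Lemma \ref{lemma:converse} then gives $\mathcal{F}^{a,b} \oplus \mathcal{F}^{a',b'} = \mathcal{F}^{a+a',b+b'}$.

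Once this identity is in place, the group structure is essentially free. Commutativity and associativity of $\oplus$ follow from the corresponding properties of $+$ on $\mathbb{Z}^2$; the tree $\mathcal{F}^{0,0}$ is a two-sided identity since $(a,b)+(0,0) = (a,b)$; and the inverse of $\mathcal{F}^{a,b}$ is $\mathcal{F}^{-a,-b}$. The map $(a,b) \mapsto \mathcal{F}^{a,b}$ is then a group homomorphism $(\mathbb{Z}^2,+) \to (\Phi,\oplus)$, surjective by definition of $\Phi$, and injective because $\mathcal{F}^{a,b}$ determines $a = A_0 = B_0$ and $b = B_1$ from its root and first level labels; hence it is an isomorphism.

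There is no real obstacle here; the only subtle point is that the somewhat ad hoc correction term $-\mathcal{L}^{0,0}_n$ in the definition of $\oplus$ is precisely what is needed for two shifted copies of the reference interval $\mathcal{L}^{0,0}_n$ to combine into a single such copy, and that the levelwise interval data together with Lemma \ref{lemma:converse} fully determine a labeled tree, so that verifying the formula at every level is enough and no further checking of the labeling rules for $u$- and $v$-nodes is required.
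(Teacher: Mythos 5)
Your proposal is correct and follows essentially the same route as the paper: substitute $\mathcal{L}^{a,b}_n = \mathcal{L}^{0,0}_n + F^{a,b}_n$ into the defining formula for $\oplus$, use the linearity $F^{a,b}_n + F^{a',b'}_n = F^{a+a',b+b'}_n$ from (\ref{equation:Fab}) to collapse the correction term, and invoke Lemma \ref{lemma:converse} to identify the result as $\mathcal{F}^{a+a',b+b'}$. Your explicit verification of the group axioms and of injectivity via the root and first-level labels is slightly more detailed than the paper's, which leaves those points implicit, but it is the same argument.
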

\begin{proof}
We have to show that given $\mathcal{F} = \mathcal{F}^{a,b}$ and $\mathcal{F'} = \mathcal{F}^{a',b'}$, the labeled Fibonacci tree $\mathcal{F} \oplus \mathcal{F'}$ is an element of $\Phi$, which is $\mathcal{F}^{a+a',b+b'}$. Using (\ref{equation:Fab}), we have $F^{a,b}_n + F^{a',b'}_n = F^{a+a',b+b'}_n$, so that the labeling of $\mathcal{F} \oplus \mathcal{F'}$ at level $n$ is
\begin{displaymath}
\mathcal{L}^{0,0}_n + F^{a,b}_n + \mathcal{L}^{0,0}_n + F^{a',b'}_n - \mathcal{L}^{0,0}_n = \mathcal{L}^{0,0}_n + F^{a+a',b+b'}_n = \mathcal{L}^{a+a',b+b'}_n. 
\end{displaymath}
By Lemma \ref{lemma:converse}, an element of $\Phi$ is entirely determined by the labelings $\mathcal{L}^{a,b}_n$, concluding the proof.
\end{proof}

For $\lambda \in \mathbb{Z}$, define
\begin{displaymath}
\lambda \mathcal{F}^{a,b} = \mathcal{F}^{\lambda a,\lambda b}.
\end{displaymath}
The notation $\lambda \mathcal{F}^{a,b}$ means the sum of $\lambda$ copies of the tree $\mathcal{F}^{a,b}$ for $\lambda \ge 0$, and the sum of $-\lambda$ copies of the tree $\mathcal{F}^{-a,-b}$ for $\lambda < 0$. We obtain a generalization of (\ref{equation:Fab}):
\begin{displaymath}
\mathcal{F}^{a,b} = a\mathcal{F}^{1,0} \oplus b\mathcal{F}^{0,1}.
\end{displaymath}

\section{Representation properties}\label{section:representation}

The labeled Fibonacci tree $\mathcal{F}^{a,b}$ \textit{represents} $\mathbb{Z}$ if every interval of $\mathbb{Z}$ is contained in the labeling of $\mathcal{F}^{a,b}$ at some level (and therefore at all higher levels). $\mathbb{Z}_-$ denotes the set of nonpositive integers, $\mathbb{Z}_+$ denotes the set of positive integers.

\begin{proposition}\label{proposition:representZ}
The tree $\mathcal{F}^{a,b}$ represents $\mathbb{Z}$ if and only if $ \; 0 < a + b\varphi < \varphi^3$.
\end{proposition}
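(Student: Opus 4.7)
The plan is to read off the condition for representing $\mathbb{Z}$ directly from the asymptotic behavior of the endpoints of the labeling interval $\mathcal{L}^{a,b}_n = \llbracket A_n \cdots B_n \rrbracket$, where $A_n = F^{a,b}_n - F_{n+2}+1$ and $B_n = F^{a,b}_n$.

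First I would observe the reformulation: since the intervals $\mathcal{L}^{a,b}_n$ are nested within the labeled tree (every level-$n$ interval is contained in every higher level's interval, as a consequence of Theorem \ref{theorem:label:Fab} together with the labeling rules that put the new children of the extreme nodes at the boundary), saying that $\mathcal{F}^{a,b}$ represents $\mathbb{Z}$ is equivalent to saying $A_n \to -\infty$ and $B_n \to +\infty$ as $n \to +\infty$. Indeed, any finite interval $\llbracket p \cdots q \rrbracket$ is contained in $\mathcal{L}^{a,b}_n$ as soon as $A_n \leqslant p$ and $B_n \geqslant q$; conversely, if one of these one-sided limits fails, then $\mathcal{L}^{a,b}_n$ lies in a fixed half-line for all $n$ and cannot contain intervals extending far enough in the missing direction.

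Next I would compute these asymptotics using Binet's formula. Writing $\psi = -1/\varphi$, we have $F_n = (\varphi^n - \psi^n)/\sqrt{5}$, so by (\ref{equation:Fab}),
\begin{displaymath}
F^{a,b}_n = aF_{n-1} + bF_n = \frac{a+b\varphi}{\sqrt{5}}\,\varphi^{n-1} \;-\; \frac{a+b\psi}{\sqrt{5}}\,\psi^{n-1}.
\end{displaymath}
The $\psi$-term decays as $|\psi|^n \to 0$, so $B_n = F^{a,b}_n$ tends to $+\infty$, to a finite constant, or to $-\infty$ according to whether $a+b\varphi$ is positive, zero, or negative (the zero case forces $a=b=0$ by irrationality of $\varphi$, giving the sequence identically zero). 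Since $F_{n+2} = F^{1,2}_n$ corresponds to $1 + 2\varphi = \varphi^3$, the same computation applied to $A_n = F^{a,b}_n - F_{n+2} + 1$ gives
\begin{displaymath}
A_n \;=\; \frac{a+b\varphi - \varphi^3}{\sqrt{5}}\,\varphi^{n-1} + O(|\psi|^n) + 1,
\end{displaymath}
so $A_n \to -\infty$ iff $a+b\varphi < \varphi^3$.

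Combining the two conditions yields the stated inequality $0 < a+b\varphi < \varphi^3$. For the necessity direction, I would handle the boundary cases explicitly: if $a+b\varphi = 0$ then $a=b=0$ and the labeling sits in $\mathbb{Z}_-\cup\{0\}$; if $a+b\varphi = \varphi^3$ then $(a,b)=(1,2)$ and by Proposition \ref{proposition:label:F12} the labelings sit in $\mathbb{Z}_+$ (the Hofstadter case). Neither extremity permits representing $\mathbb{Z}$, confirming that both inequalities must be strict. The only mildly delicate point is the nesting claim at the outset; this is where one must invoke that each level's label interval has length $F_{n+2}$ and straddles the previous level's interval, so that growth of $B_n$ and growth of $-A_n$ together are not only necessary but also sufficient to swallow arbitrary integer intervals.
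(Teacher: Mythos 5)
Your proof is correct and follows essentially the same route as the paper's: both reduce the question to the asymptotic behaviour of the two endpoint sequences $B_n = F^{a,b}_n$ and $A_n - 1 = F^{a,b}_n - F_{n+2} = F^{a-1,b-2}_n$, whose divergence to $+\infty$ and $-\infty$ is governed by the signs of $a+b\varphi$ and $a+b\varphi-\varphi^3$ respectively. Your packaging via Binet's formula is a little cleaner than the paper's argument (which divides by $F_{n-1}$, passes to the limit of the ratio, and then separately establishes sufficiency with an exact error-term identity), and excluding the boundary cases $a+b\varphi\in\{0,\varphi^3\}$ by irrationality of $\varphi$ is tidier than the paper's appeal to its figures. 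One caveat: the nesting claim you open with --- that every level-$n$ label interval is contained in every higher level's interval --- is false for general $(a,b)$. Since $A_{n+1}-A_n = F^{a-1,b-2}_{n-1}$, the left endpoint is eventually \emph{increasing} whenever $a+b\varphi>\varphi^3$ (e.g.\ for $\mathcal{F}^{5,0}$ both endpoints drift to $+\infty$), so the intervals can slide rightward rather than nest. Fortunately the claim is not load-bearing: sufficiency needs only that $A_n\leqslant p$ and $B_n\geqslant q$ hold simultaneously for all large $n$, which your asymptotics give directly, and necessity follows from your observation that if either one-sided limit fails the labels stay in a fixed half-line. I would simply delete the nesting remark rather than try to shore it up.
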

\begin{proof}
The leftmost label at level $n$ of $\mathcal{F}^{a,b}$ is $F^{a,b}_n - F_{n+2} + 1$. Using (\ref{equation:Fab}),
\begin{displaymath}
F^{a,b}_n - F_{n+2} = aF_{n-1} + bF_{n} - F_{n-1} - 2F_n = (a-1)F_{n-1} + (b-2)F_{n} = F^{a-1,b-2}_n.
\end{displaymath}
Therefore, the integer interval represented by $\mathcal{F}^{a,b}$ at level $n$ is
\begin{displaymath}
\mathcal{L}^{a,b}_n = \llbracket F^{a-1,b-2}_n + 1 \cdots F^{a,b}_n \rrbracket. 
\end{displaymath}
If $\mathcal{F}^{a,b}$ represents $\mathbb{Z}$, we have 
\begin{displaymath}
(a-1)F_{n-1} + (b-2)F_n + 1 < 0, \quad aF_{n-1} + bF_n > 0, 
\end{displaymath}
for large $n$. The first inequality gives
\begin{displaymath}
(a-1) + (b-2)\frac{F_n}{F_{n-1}} + \frac{1}{F_{n-1}} < 0.
\end{displaymath}
When $n \rightarrow +\infty$, as $\frac{F_n}{F_{n-1}} \rightarrow \varphi$, we obtain $a + b\varphi - (1 + 2\varphi) \leqslant 0$. So, $a + b\varphi \leqslant 1 + 2\varphi = \varphi^3$, with equality only if $(a,b) = (1,2)$. But we know from Figure \ref{figure:fibtree:F12} that the tree $\mathcal{F}^{1,2}$ does not represent $\mathbb{Z}$. Thus, $a + b\varphi < \varphi^3$. The second inequality gives 
\begin{displaymath}
a + b\frac{F_n}{F_{n-1}} > 0.
\end{displaymath}
When $n \rightarrow +\infty$, we obtain $a + b\varphi \geqslant 0$ with equality only if $(a,b) = (0,0)$. But Figure \ref{figure:fibtree:F01} shows that the tree $\mathcal{F}^{0,0}$ does not represent $\mathbb{Z}$. Thus, $a + b\varphi > 0$. 

Conversely, assume $a + b\varphi < \varphi^3$. Then there exists $\epsilon > 0$ such that
\begin{displaymath}
(a-1) + (b-2)\varphi + \epsilon \leqslant 0. 
\end{displaymath}
The relation
\begin{displaymath}
\varphi = \frac{F_n}{F_{n-1}} - \frac{1}{F_{n-1}}\frac{(-1)^{n-1}}{\varphi^{n-1}}
\end{displaymath}
gives
\begin{displaymath}
(a-1)F_{n-1} + (b-2)F_n + (b-2)\frac{(-1)^{n-1}}{\varphi^{n-1}} \leqslant -\epsilon F_{n-1} < 0.
\end{displaymath}
This shows that when $n \rightarrow +\infty$, $(a-1)F_{n-1} + (b-2)F_n \rightarrow -\infty$. Similarly, the condition $a + b\varphi > 0$ leads to $aF_{n-1} + bF_n \rightarrow +\infty$ when $n \rightarrow +\infty$.
\end{proof}

The infinite set of pairs $(a,b)$ satisfying the conditions of Proposition \ref{proposition:representZ} is depicted in Figure \ref{figure:lattice}. The tree $\mathcal{F}^{0,0}$ is the only one representing $\mathbb{Z_-}$ exactly, and the tree $\mathcal{F}^{1,2}$ is the only one representing $\mathbb{Z_+}$ exactly (open circles on Fig. \ref{figure:lattice}). 

\begin{figure}[ht]
\centering
\includegraphics[scale = 0.44]{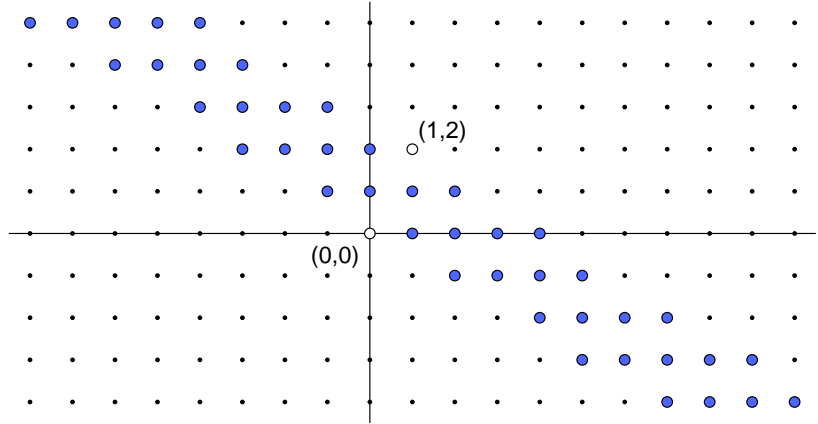}
\caption{Filled circles correspond to the pairs $(a,b)$ for which the tree $\mathcal{F}^{a,b}$ represents every interval of $\mathbb{Z}$.}
\label{figure:lattice}
\end{figure}

The set of Fibonacci sequences is denoted
\begin{displaymath}
\mathbb{F} = \{\mathbf{F}^{a,b}; (a,b) \in \mathbb{Z}^2\}. 
\end{displaymath}
The set of nonpositive Fibonacci sequences, whose terms eventually belong to $\mathbb{Z}_-$, is denoted $\mathbb{F}_-$, and the set of positive Fibonacci sequences, whose terms eventually belong to $\mathbb{Z}_+$, is denoted $\mathbb{F}_+$.

The labeled tree $\mathcal{F}^{a,b}$ \textit{represents} an element of $\mathbb{F}$ if the sequence is equivalent (identical up to a shift of index) to a sequence appearing along an ascending branch of the tree. $\mathcal{F}^{a,b}$ \textit{represents} $\mathbb{F}$ if it represents any element of $\mathbb{F}$.

The set of trees representing $\mathbb{Z}$ (Proposition \ref{proposition:representZ}) is denoted
\begin{displaymath}
\Psi = \{ \mathcal{F}^{a,b}; (a,b) \in \mathbb{Z}^2, \; 0 < a + b\varphi < \varphi^3 \}.
\end{displaymath}  

\begin{theorem}\label{theorem:representF} 
A labeled Fibonacci tree represents the set of Fibonacci sequences if and only if it is an element of $\Psi$.
\end{theorem}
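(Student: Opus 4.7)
The plan is to handle the two directions separately, relying on the explicit label formulas of Corollaries \ref{corollary:label:node} and \ref{corollary:label:parent}.

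For necessity, I would note that if $\mathcal{F}^{a,b}$ represents every Fibonacci sequence then it in particular represents $\mathbf{F}^{0,1}$ and $\mathbf{F}^{0,-1}$. The first forces the rightmost labels $B_n=F^{a,b}_n$ to tend to $+\infty$, giving $a+b\varphi>0$, and the second forces the leftmost labels $A_n$ to tend to $-\infty$, giving $a+b\varphi<\varphi^3$. The two boundary cases $(a,b)=(0,0)$ and $(a,b)=(1,2)$ correspond to the trees $\mathcal{F}^{0,0}$ and $\mathcal{F}^{1,2}$, whose labels lie in $\mathbb{Z}_-$ and $\mathbb{Z}_+$ respectively and which therefore cannot represent Fibonacci sequences of the opposite sign. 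Hence the inequalities are strict and $\mathcal{F}^{a,b}\in\Psi$.

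For sufficiency, fix $\mathcal{F}^{a,b}\in\Psi$ and $\mathbf{G}\in\mathbb{F}$. It suffices to find a level $n$ and a $u$-node $Q$ at level $n$ whose label is $G_j$ and whose parent has label $G_{j-1}$ for some $j\in\mathbb{Z}$; then the ascending branch through $Q$ traces out $G_j,G_{j+1},G_{j+2},\ldots$, which is $\mathbf{G}$ up to a shift. Writing $C_n:=A_n-1=F^{a-1,b-2}_n$ and $H_n:=-C_n=F^{1-a,2-b}_n$, the hypothesis $\sigma:=a+b\varphi\in(0,\varphi^3)$ makes $\mathbf{H}$ a positive Fibonacci sequence with $\sigma_H=\varphi^3-\sigma$. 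Corollaries \ref{corollary:label:node} and \ref{corollary:label:parent} then reduce the problem to finding $n,j$ such that
\[
u(G_{j-1}+H_{n-1})=G_j+H_n\qquad\text{and}\qquad 1\le G_{j-1}+H_{n-1}\le F_{n+1}.
\]

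For nonzero $\mathbf{G}$, the plan is first to choose $j$ large and of the appropriate parity so that $u(G_{j-1})=G_j$: this follows from the Beatty formula (\ref{equation:beatty}) combined with the identity $\Delta_n:=G_n\varphi-G_{n+1}=-\Delta_{n-1}/\varphi$ proved in Lemma \ref{lemma:uG}, which forces $\Delta_n\to 0$ with alternating sign, so that $\Delta_{j-1}\in(0,1)$ for $j$ large and of the right parity. With $i:=G_{j-1}\ne 0$ fixed, Lemma \ref{lemma:uG} applied to the Fibonacci sequence $\mathbf{H}$ then delivers
\[
u(G_{j-1}+H_{n-1})=u(G_{j-1})+H_n=G_j+H_n
\]
for all $n$ past some threshold depending on $j$. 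The range condition holds automatically because $H_{n-1}\to+\infty$ and $H_{n-1}/F_{n+1}\to\sigma_H/\varphi^3=1-\sigma/\varphi^3\in(0,1)$. The zero sequence $\mathbf{G}=\mathbf{0}$ is handled separately: the target equation collapses to $u(H_{n-1})=H_n$, which is satisfied for infinitely many $n$ by applying the same Beatty-plus-alternation argument directly to the nonzero positive Fibonacci sequence $\mathbf{H}$, and Lemma \ref{lemma:uG0} pinpoints the reference level at which this first occurs.

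The principal obstacle is coordinating the two asymptotic regimes: once $j$ is fixed (large, correct parity, $G_{j-1}\ne 0$), Lemma \ref{lemma:uG}'s threshold depends on the fractional part of $G_{j-1}\varphi$, while the range bound is governed by the single ratio $\sigma_H/\varphi^3=1-\sigma/\varphi^3$. This ratio stays strictly between $0$ and $1$ precisely when $\mathcal{F}^{a,b}\in\Psi$ (the lower bound $\sigma>0$ keeps it below $1$, the upper bound $\sigma<\varphi^3$ keeps $\mathbf{H}$ positive), so the hypothesis of the theorem is exactly what the proof needs.
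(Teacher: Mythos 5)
Your proposal is correct, and while it shares the paper's core machinery (Corollaries \ref{corollary:label:node} and \ref{corollary:label:parent} to translate node labels into the Wythoff function $u$, Lemma \ref{lemma:uG} applied to the offset sequence $F^{a-1,b-2}_n$, and a range condition $1\le k\le F_{n+1}$ guaranteeing the node exists), it executes the key step differently. The paper routes the argument through primitive Wythoff pairs: it invokes the cited fact that consecutive terms of any Fibonacci sequence eventually form a primitive Wythoff pair $(uu(j),vu(j))$, and then shows every such pair occurs as a primitive tree-pair, using the identity $v=uu+1$ of equation (\ref{equation:uu}). You instead prove the needed input inline -- that $u(G_{j-1})=G_j$ for infinitely many $j$, via the Beatty formula and the alternating decay $\Delta_n=-\Delta_{n-1}/\varphi$ -- and then realize the pair $(G_{j-1},G_j)$ directly, which makes the argument self-contained and dispenses with (\ref{equation:uu}). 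Your treatment of the zero sequence is also lighter: since your range condition is verified by the single asymptotic ratio $H_{n-1}/F_{n+1}\to 1-\sigma/\varphi^3\in(0,1)$, the case $\mathbf{G}=\mathbf{0}$ needs only the existence of some $n$ with $u(H_{n-1})=H_n$ (the same alternation argument applied to $\mathbf{H}$), whereas the paper spends a paragraph on the reference index $\nu$ and Lemma \ref{lemma:uG0} to pin down the range condition -- machinery that is really needed for the uniqueness statement of Proposition \ref{proposition:pairs}, not for mere representation. Your converse, via the specific sequences $\mathbf{F}^{0,1}$ and $\mathbf{F}^{0,-1}$ with the boundary cases $(0,0)$ and $(1,2)$ excluded by the sign of their labels, is a slightly more explicit version of the paper's one-line converse. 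What the paper's route buys is the explicit identification of which Wythoff pair each branch realizes, which it reuses later; what yours buys is independence from the external ``eventually Wythoff'' citation and a shorter zero-sequence case.
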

\begin{proof}
We take a tree $\mathcal{F}^{a,b} \in \Psi$ and a given Fibonacci sequence, and show that that there exists a pair of consecutive terms of the sequence that appears as consecutive labels along an ascending branch of $\mathcal{F}^{a,b}$. Like for the Wythoff array, the argument is based on the fact that pairs of consecutive terms of any Fibonacci sequence eventually become Wythoff pairs \cite{Morrison 1980,Vandervelde 2012}. In fact, we show that every primitive Wythoff pair $(uu(j),vu(j))$, $j \in \mathbb{Z}^*$, appears as a primitive tree-pair (see Definition \ref{definition:primitive} in Section \ref{section:wythoff}). The zero sequence $\mathbf{F}^{0,0}$ is represented by a primitive tree-pair, but has no corresponding Wythoff pair, and is treated separately. 

Let $j \in \mathbb{Z}^*$. We look for a primitive $u$-node $Q$ labeled $y = uu(j)$ and its parent $u$-node $P$ labeled $x = u(j)$. Then $x + y = u(j) + uu(j) = vu(j)$, so that the primitive tree-pair $(y,x+y)$ represents the given Fibonacci sequence. Let us denote $G_n = F^{a-1,b-2}_n$. The leftmost label $A_n$ at level $n$ satisfies $A_n - 1 = G_n$. As the tree represents every interval of $\mathbb{Z}$ (Proposition \ref{proposition:representZ}), we can find $n_0 \geqslant 2$ such that $n \geqslant n_0$ implies $j \in \mathcal{L}^{a,b}_{n-2}$, i.e., $1 + G_{n-2} \leqslant j \leqslant G_{n-2} + F_n$. Hence, if we set $i_n = j - G_{n-2}$, then $1 \leqslant i_n \leqslant F_n$. By Lemma \ref{lemma:uG}, there exists $n_1$ such that for any $n \geqslant n_1$ we have $u(i_n + G_{n-2}) = u(i_n) + G_{n-1}$. We take $n$ larger than $n_0$ and $n_1$ and set $i = i_n$. The node $P$ with label $x = G_{n-1} + u(i)$ is a $u$-node, by Corollary \ref{corollary:label:node}. (The condition $1 \leqslant i \leqslant F_n$ ensures that $x$ is the label of a node at level $n-1$. Indeed, according to Corollary \ref{corollary:label:node}, $i$ is the number of $u$-nodes to the left of $P$. This number must be at least 1, and at most $F_n$, the total number of $u$-nodes at level $n-1$.) By Corollary \ref{corollary:label:parent}, $x = G_{n-1} + k$, where $k$ is the number of $u$-nodes to the left of the $u$-node $Q$. Thus $k = u(i)$. By Corollary \ref{corollary:label:node} again, $Q$ has label $y = G_n + u(k) = G_n + uu(i)$. $Q$ is a primitive node. We now have $x = G_{n-1} + u(i) = u(i + G_{n-2}) = u(j)$, and 
\begin{displaymath}
v(j) = j + u(j) = i + G_{n-2} + G_{n-1} + u(i) = G_n + i + u(i) = G_n + v(i).
\end{displaymath}
Using (\ref{equation:uu}), we obtain $uu(j) = G_n + uu(i) = y$. 

For the sequence $\mathbf{F}^{0,0}$, we proceed similarly. By Lemma \ref{lemma:uG0}, there exists a unique $n \geqslant \nu+2$ such that $x = G_{n-1} + u(i) = 0$ with $i = 1 - G_{n-2}$, and where the reference index $\nu$ of $\mathbf{G}$ has the property that $G_{\nu}$ is the largest nonpositive term of the sequence. As in the general case, we consider the $u$-node $P$ at level $n-1$ whose label is $x = 0$, and its child $u$-node $Q$, whose label is $y = G_n + uu(i)$. We now have, using (\ref{equation:uu}), 
\begin{displaymath}
y = G_n + uu(i) = G_n + i + u(i) - 1 = G_n + 1 - G_{n-2} - G_{n-1} - 1 = 0.   
\end{displaymath}
Thus, the primitive tree-pair $(y,x+y) = (0,0)$ represents $\mathbf{F}^{0,0}$. To complete this part of the proof, it must be checked that $i_n = i$ satisfies $1 \leqslant i_n \leqslant F_n$. By assumption, there exists a smallest $n_0 \in \mathbb{N}$ such that for $m \geqslant n_0+2$, the integer $1$ is represented at level $m-2$, i.e., $1 + G_{m-2} \leqslant 1 \leqslant G_{m-2} + F_m$. Hence, if $i_m = 1 - G_{m-2}$, then $1 \leqslant i_m \leqslant F_m$ for $m \geqslant n_0+2$. We prove that $\nu \geqslant n_0$. As $n \geqslant \nu+2$, this will imply $n \geqslant n_0+2$, and $1 \leqslant i_n \leqslant F_n$ as desired. By definition of $\nu$, $G_{\nu} \leqslant 0$, i.e., $1 + G_{\nu} \leqslant 1$,  and it remains to show that $G_{\nu} + F_{\nu+2} \geqslant 1$. If, on the contrary, $G_{\nu} + F_{\nu+2} < 1 $, then $G_{\nu} + F_{\nu+2} \leqslant 0$. But, by definition of $\nu$, $G_{\nu} > G_{\nu-1}$. This gives
\begin{displaymath}
G_{\nu-1} + F_{\nu+1} < G_{\nu-1} + F_{\nu+2} < G_{\nu} + F_{\nu+2} \leqslant 0.   
\end{displaymath}
By the Fibonacci recursion, $G_{\nu-1} + F_{\nu+1} \leqslant 0$ and $G_{\nu} + F_{\nu+2} \leqslant 0$ lead to $G_{\nu+1} + F_{\nu+3} \leqslant 0$. We can pursue the recursion to get $G_{\nu+p} + F_{\nu+2+p} \leqslant 0$ for any $p \in \mathbb{N}$. This contradicts the fact that $F^{a,b}_m = G_{m} + F_{m+2} \rightarrow \infty$.

The reasoning of the previous paragraph does not work for the tree $\mathcal{F}^{0,1}$ because the tree-pair $(0,0)$ appears at level 1 (in fact, $\nu = -1$). It is the only exceptional case. Nevertheless, the formulas still hold, and it can also be seen directly that $\mathbf{F}^{0,0}$ is represented by $\mathcal{F}^{0,1}$ (Fig. \ref{figure:fibtree:F01}).

To show the converse in the theorem, we note that if a tree $\mathcal{F}^{a,b}$ is not an element of $\Psi$, it does not represent $\mathbb{Z}$, and there are Fibonacci sequences that are not represented by the tree.
\end{proof}

When $\mathcal{F}^{a,b} \in \Psi$, the pair $(a,b)$ is not a Wythoff pair. Indeed, its terms are either of opposite sign, or in the 8 cases where the terms are nonnegative (Fig. \ref{figure:lattice}), they do not form a Wythoff pair. The primitive Wythoff pair $(c,d)$ corresponding to the sequence $\mathbf{F}^{a,b}$ is positive, and appears further up in the main branch of the tree : there exists $n > 0$ such that $F^{a,b}_n = c$ and $F^{a,b}_{n+1} = d$. The pair $(c,d)$ also appears elsewhere in the tree, as any primitive Wythoff pair. For example, in the tree $\mathcal{F}^{0,1} \in \Psi$, the Wythoff pair $(1,2)$ representing the Fibonacci sequence $\mathbf{F} = \mathbf{F}^{0,1}$ appears at level 2 as a nonprimitive tree-pair, and appears at all levels $n \geqslant 3$ as a primitive tree-pair (Fig. \ref{figure:fibtree:F01}).

\begin{proposition}\label{proposition:pairs} 
In a tree $\mathcal{F} \in \Psi$, every Fibonacci sequence is represented by infinitely many branches, except for the zero sequence $\mathbf{F}^{0,0}$ that is represented by a single branch.
\end{proposition}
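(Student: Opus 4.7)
The plan is to extract a sharper statement from the construction performed in the proof of Theorem \ref{theorem:representF}. Given a nonzero Fibonacci sequence $\mathbf{G}$ with primitive Wythoff pair $(uu(j), vu(j))$, that construction yields, for every integer $n \geq \max(n_0, n_1)$, a primitive tree-pair in $\mathcal{F}^{a,b}$ whose labels are exactly $(uu(j), vu(j))$ and whose primitive $u$-node sits at level $n$. Since $n_0$ (governing representability of $j$ at level $n-2$) and $n_1$ (from Lemma \ref{lemma:uG}) are both finite, the set of admissible $n$ is infinite. Distinct values of $n$ place the primitive $u$-node at distinct levels and therefore root disjoint ascending branches, all representing $\mathbf{G}$. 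This yields the infinitely-many-branches half of the statement.

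For the zero sequence I would revisit the variant of the argument that uses Lemma \ref{lemma:uG0} in place of Lemma \ref{lemma:uG}. The key difference is that Lemma \ref{lemma:uG0} asserts the \emph{unique} existence of the relevant $n$, so exactly one primitive tree-pair $(0,0)$ is produced by the construction. To upgrade this to uniqueness of the representing branch itself, I would note that any ascending branch representing $\mathbf{F}^{0,0}$ must carry the label $0$ at every one of its nodes (the zero sequence is shift-invariant), and that by Theorem \ref{theorem:label:Fab} each level of $\mathcal{F}^{a,b}$ contains at most one node with label $0$. Hence the branch is determined node by node once its primitive tree-pair is fixed, and by the preceding observation that primitive tree-pair is itself unique.

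The main subtlety I anticipate is the exceptional tree $\mathcal{F}^{0,1}$ already flagged in the proof of Theorem \ref{theorem:representF}: there the primitive tree-pair $(0,0)$ consists of the root and its $u$-child at level $1$, which lies outside the regime in which Lemma \ref{lemma:uG0} was formally invoked. I would treat this case by direct inspection, observing that the $u$-child of the root is the only level-$1$ node labeled $0$, that no other primitive tree-pair $(0,0)$ can occur (again because each level carries at most one such label), and that the unique branch so obtained does represent $\mathbf{F}^{0,0}$, as already remarked in the proof of Theorem \ref{theorem:representF}. This completes the plan.
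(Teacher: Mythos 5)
Your proposal is correct and follows essentially the same route as the paper: both extract from the construction in the proof of Theorem \ref{theorem:representF} that each primitive Wythoff pair occurs as a primitive tree-pair at every sufficiently large level (hence roots infinitely many distinct ascending branches), while the uniqueness in Lemma \ref{lemma:uG0} forces the primitive tree-pair $(0,0)$ to occur exactly once. Your additional care in passing from uniqueness of the tree-pair $(0,0)$ to uniqueness of the representing branch (each level carries at most one node labeled $0$, by Theorem \ref{theorem:label:Fab}), and your separate treatment of $\mathcal{F}^{0,1}$, only make explicit what the paper leaves implicit.
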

\begin{proof}
The pair $(0,0)$ appears as a unique primitive tree-pair, as seen in the proof of Theorem \ref{theorem:representF}. Therefore, the sequence $\mathbf{F}^{0,0}$ is represented by a single ascending branch of the tree. Also from the proof of Theorem \ref{theorem:representF}, a primitive Wythoff pair appears in the tree as a primitive tree-pair at all levels above some level. Hence it appears in infinitely many different ascending branches, since each primitive tree-pair is rooted at a primitive node that starts a new branch.
\end{proof}

To summarize, the set $\Phi$ can be partionned into three subsets: (1) the trees $\mathcal{F}^{a,b}$ such that $0 < a + b\varphi < \varphi^3$, constituting $\Psi$, and representing $\mathbb{Z}$ and $\mathbb{F}$, (2) those such that $a + b\varphi \leqslant 0$ representing subsets of $\mathbb{Z}_-$ and $\mathbb{F}_-$, and (3) those such that $a + b\varphi \geqslant \varphi^3$, representing subsets of $\mathbb{Z}_+$ and $\mathbb{F}_+$.

\section{Order relation}\label{section:order}

For two trees $\mathcal{F}, \mathcal{F'} \in \Phi$, the notation $\mathcal{F}  \vartriangleleft \mathcal{F}'$ means that $\mathcal{F}$ is a subtree of $\mathcal{F}'$ such that the root of $\mathcal{F}$ is a $u$-node of $\mathcal{F}'$, and corresponding labels are identical. When $\mathcal{F} \vartriangleleft \mathcal{F}'$, we say that $\mathcal{F}'$ \textit{contains} $\mathcal{F}$. For example, the tree $\mathcal{F}^{0,1}$ contains $\mathcal{F}^{0,0}$ and $\mathcal{F}^{1,2}$ as subtrees (Fig. \ref{figure:fibtree:F01}).\\

We shall use two affine maps on the ring $\mathbb{Z}[\varphi]$:
\begin{displaymath}
L(x + y\varphi) = (y-1) + (x+y-1)\varphi, \quad R(x + y\varphi) = (x+y) + (x + 2y)\varphi.
\end{displaymath} 
The map $L$ sends the pair $(x,y)$ at the root of the tree $\mathcal{F}^{x,y}$ to the pair $(y-1,x+y-1)$ at the root of the first left subtree (highlighted on the left of Fig. \ref{figure:fibtree:recursion}). The map $R$ sends the pair $(x,y)$ to the pair $(x+y,x+2y)$ at the root of the first right subtree (highlighted on the right of Fig. \ref{figure:fibtree:recursion}).
The relations
\begin{displaymath}
(y-1)  + (x+y-1)\varphi = \varphi(x + y\varphi) - \varphi^2, \quad
(x + y) + (x+2y)\varphi = \varphi^{2}(x + y\varphi),
\end{displaymath}
show that for $z = x + y\varphi \in \mathbb{Z}[\varphi]$, 
\begin{displaymath}
L(z) = \varphi z - \varphi^2 = \varphi(z - \varphi^3) + \varphi^3, \quad R(z) = \varphi^{2}z.
\end{displaymath}

Underlying this formulation are the group isomorphisms:
\begin{displaymath}
\begin{array}{c c c c c}
\Phi &\overset\sim\longrightarrow& \mathbb{Z}^2 &\overset\sim\longrightarrow& \mathbb{Z}[\varphi]\\
\mathcal{F}^{a,b} &\longmapsto& (a,b) &\longmapsto& a + b\varphi.
\end{array}
\end{displaymath}

\begin{proposition}\label{proposition:selfcontain} 
The trees $\mathcal{F}^{1,2}$ and $\mathcal{F}^{0,0}$ are the only elements of $\Phi$ containing nested copies of themselves.
\end{proposition}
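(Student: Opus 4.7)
I would reformulate nested self-containment as a fixed-point equation for the maps $L,R$, then use the hyperbolic structure of these maps together with their action on the Galois conjugate $\bar{z}=a+b\bar\varphi$ to isolate the two claimed solutions.

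By the recursive decomposition in Figure~\ref{figure:fibtree:recursion}, every $u$-rooted subtree of $\mathcal{F}^{a,b}$ at positive depth is reached by applying some nonempty word $w$ in the free monoid on $\{L,R\}$, and equals $\mathcal{F}^{w(a,b)}$. Thus $\mathcal{F}^{a,b}$ contains a nested copy of itself iff there is a nonempty $w$ with $w(a,b)=(a,b)$. In the variable $z=a+b\varphi$ the two one-letter fixed points $z=\varphi^{3}$ (of $L$) and $z=0$ (of $R$) give exactly $\mathcal{F}^{1,2}$ and $\mathcal{F}^{0,0}$, establishing existence; the real work is uniqueness.

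For uniqueness I would impose two constraints on any such $z^{*}$. First, since $L,R$ are expansions of $\mathbb{R}$ with $L^{-1}([0,\varphi^{3}])=[\varphi,\varphi^{3}]$ and $R^{-1}([0,\varphi^{3}])=[0,\varphi]$, the word $w^{-1}$ is a contraction stabilizing $K^{+}=[0,\varphi^{3}]$, so its fixed point (which coincides with $z^{*}$) lies in $K^{+}$. Second, under the Galois involution the maps become $\bar L(\bar z)=\bar\varphi\bar z-\bar\varphi^{2}$ and $\bar R(\bar z)=\bar\varphi^{2}\bar z$, already contractions since $|\bar\varphi|,\bar\varphi^{2}<1$; a direct check shows $\bar L([\varphi-2,0])=[\varphi-2,3\varphi-5]$ and $\bar R([\varphi-2,0])=[3\varphi-5,0]$, whose union is $K^{-}=[\varphi-2,0]$. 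Hence $z^{*}\in K^{+}$ and $\bar z^{*}\in K^{-}$ simultaneously.

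Finally, I would enumerate $(a,b)\in\mathbb{Z}^{2}$ satisfying both $0\leqslant a+b\varphi\leqslant\varphi^{3}$ and $\varphi-2\leqslant a+b\bar\varphi\leqslant 0$. A case analysis on $b$ shows: $b<0$ is incompatible with $z\geqslant 0\geqslant\bar z$; $b=0$ forces $a=0$; $b=1$ traps $a$ in $[2\varphi-3,\varphi-1]\approx[0.24,0.62]$ which contains no integer; $b=2$ forces $a=1$; and $b\geqslant 3$ gives empty constraints. This leaves exactly $(0,0)$ and $(1,2)$, both of which self-contain via $R(0,0)=(0,0)$ and $L(1,2)=(1,2)$. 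I expect the main obstacle to be the conjugate-side attractor $K^{-}$: the forward constraint $z^{*}\in[0,\varphi^{3}]$ alone admits spurious candidates like $(2,0)$ or $(0,1)$, and only the sharp contraction on $\bar z$ eliminates them.
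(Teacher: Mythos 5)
Your proof is correct, and its uniqueness argument takes a genuinely different route from the paper's. Both proofs share the same reduction: a nested copy of $\mathcal{F}^{a,b}$ corresponds to a fixed point $z^{*}=a+b\varphi$ of a nonempty word $w$ in $L,R$, and the one-letter fixed points give $(1,2)$ and $(0,0)$. From there the paper proceeds algebraically: it uses the commutator identity $(L^{p}R^{q}-R^{q}L^{p})z=\varphi^{3}(\varphi^{p}-1)(\varphi^{2q}-1)$ to rewrite an arbitrary word as $R^{q}L^{p}$ plus an explicit constant, and then tests the two candidate values $\varphi^{3}$ and $0$ in the resulting equation to force the word to be a pure power of $L$ or of $R$. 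You instead confine the fixed point under both real embeddings of $\mathbb{Z}[\varphi]$: the inverse word is a contraction preserving $K^{+}=[0,\varphi^{3}]$, so $z^{*}\in K^{+}$, while the Galois-conjugate word is a contraction preserving its attractor $K^{-}=[\varphi-2,0]$, so $\bar z^{*}\in K^{-}$; the integer lattice meets the box $K^{+}\times K^{-}$ only at $(0,0)$ and $(1,2)$. This two-sided confinement is what makes the enumeration finite, and it handles all words uniformly and bounds \emph{every} possible integral fixed point at once, rather than testing the two specific values $0$ and $\varphi^{3}$ as the paper does; the price is the extra IFS/contraction machinery. The one spot in your sketch that needs an explicit line is the dismissal of $b\geqslant 3$: the conjugate constraint alone is \emph{not} empty there (e.g.\ $(3,5)$ gives $\bar z=8-5\varphi\in[\varphi-2,0]$), so both constraints are genuinely needed; the cleanest fix is to observe that $b\sqrt{5}=z^{*}-\bar z^{*}\in[0,\varphi^{3}-(\varphi-2)]=[0,3+\varphi]$, which forces $b\in\{0,1,2\}$ in one stroke and also disposes of $b<0$.
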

\begin{proof}
It is clear that the trees $\mathcal{F}^{1,2}$ and $\mathcal{F}^{0,0}$ contain themselves infinitely many times as proper subtrees (see Fig. \ref{figure:fibtree:F01}). Conversely, assume that the tree $\mathcal{F}^{a,b}$ contains itself as a proper subtree. Then it contains a $u$-node labeled $a$ that is not the root node, and whose child $v$-node is labeled $b$. This pair $(a,b)$ up in the tree can be reached from the root pair $(a,b)$ by applying the maps $R$ and $L$ to $a + b\varphi$ in $\mathbb{Z}[\varphi]$. In other words, $z^* = a + b \varphi$ is a fixed point of a composition of $L$ and $R$, and we have to solve $z = L^{p_1}R^{q_1} \cdots L^{p_k}R^{q_k}z$ or $z = R^{q_1}L^{p_1} \cdots R^{q_k}L^{p_k}z$, with $p_i, q_i \geqslant 0$ not all zero. The iterates of the maps $L$ and $R$ are 
\begin{displaymath}
L^p(z) = \varphi^{p}(z - \varphi^3) + \varphi^{3}, \quad R^{q}(z) = \varphi^{2q}z.
\end{displaymath}
The unique fixed point of $L^p$ is $z^* = \varphi^3 = 1 + 2\varphi$. The unique fixed point of $R^q$ is $z^* = 0$. 

We show that either $z = L^p z$, in which case the fixed point is $z^* = 1 + 2\varphi$ leading to $(a,b) = (1,2)$, or $z = R^q z$, in which case the fixed point is $z^* = 0$ leading to $(a,b) = (0,0)$. Using the formula
\begin{displaymath}
(L^pR^q - R^qL^p)z = \varphi^3(\varphi^p - 1)(\varphi^{2q} - 1) = \xi_{p,q},
\end{displaymath}
we obtain
\begin{displaymath}
L^{p_1}R^{q_1} \cdots L^{p_k}R^{q_k}z = R^{q_1 + \cdots + q_k}L^{p_1 + \cdots + p_k}z + \xi_{p_1,q_1} + \cdots + \xi_{p_k,q_k}.
\end{displaymath}
With $p = p_1 + \cdots + p_k$, $q = q_1 + \cdots + q_k$, the equation for the fixed point becomes
\begin{displaymath}
z = R^{q}L^{p}z + \xi_{p_1,q_1} + \cdots + \xi_{p_k,q_k}.
\end{displaymath}
Setting $z = \varphi^3$ leads to
\begin{displaymath}
1 = \varphi^{2q} + (\varphi^{p_1} - 1)(\varphi^{2q_1} - 1) + \cdots + (\varphi^{p_k} - 1)(\varphi^{2q_k} - 1).
\end{displaymath}
The only solution is $q_i = 0$ for all $i$, giving $z = L^p z$. Similarly,
\begin{displaymath}
R^{q_1}L^{p_1} \cdots R^{q_k}L^{p_k}z = L^{p_1 + \cdots + p_k}R^{q_1 + \cdots + q_k}z - (\xi_{p_1,q_1} + \cdots + \xi_{p_k,q_k})
\end{displaymath}
gives
\begin{displaymath}
z = L^{p}R^{q}z - (\xi_{p_1,q_1} + \cdots + \xi_{p_k,q_k}).
\end{displaymath}
Setting $z = 0$ leads to
\begin{displaymath}
1 = \varphi^{p} + (\varphi^{p_1} - 1)(\varphi^{2q_1} - 1) + \cdots + (\varphi^{p_k} - 1)(\varphi^{2q_k} - 1).
\end{displaymath}
The only solution is $p_i = 0$ for all $i$, giving $z = R^q z$.
\end{proof}

\begin{theorem}\label{theorem:partialorder}
The relation $\vartriangleleft$ is a partial order on $\Phi$.
\end{theorem}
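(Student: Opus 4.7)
The plan is to verify the three defining properties of a partial order---reflexivity, transitivity, and antisymmetry---with the last being the substantive step. Reflexivity is immediate: the root of $\mathcal{F}$ is a $u$-node of $\mathcal{F}$, and the subtree rooted there is $\mathcal{F}$ itself with the same labels. For transitivity, if $\mathcal{F} \vartriangleleft \mathcal{F}'$ and $\mathcal{F}' \vartriangleleft \mathcal{F}''$, the root of $\mathcal{F}$ is identified with a $u$-node of $\mathcal{F}'$, which is in turn identified with a $u$-node of $\mathcal{F}''$ via the second embedding, with all labels preserved throughout the respective subtrees. Composing the two embeddings realizes $\mathcal{F}$ as a labeled subtree of $\mathcal{F}''$ rooted at a $u$-node, so $\mathcal{F} \vartriangleleft \mathcal{F}''$.

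The main obstacle is antisymmetry, and I would argue it by contradiction using Proposition~\ref{proposition:selfcontain}. Assume $\mathcal{F} \vartriangleleft \mathcal{F}'$ and $\mathcal{F}' \vartriangleleft \mathcal{F}$ with $\mathcal{F} \neq \mathcal{F}'$, and let $d_1, d_2$ denote respectively the depths in $\mathcal{F}$ and $\mathcal{F}'$ of the $u$-nodes rooting the embedded copies. Necessarily $d_1, d_2 > 0$: indeed, if $d_2 = 0$ the subtree of $\mathcal{F}'$ rooted at its own root is $\mathcal{F}'$, which would then equal $\mathcal{F}$ by the label-matching condition, contradicting $\mathcal{F} \neq \mathcal{F}'$, and the case $d_1 = 0$ is symmetric. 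Composing the two embeddings (as in transitivity) realizes $\mathcal{F}$ as a proper subtree of itself rooted at a $u$-node at depth $d_1 + d_2 > 0$. Proposition~\ref{proposition:selfcontain} then forces $\mathcal{F} \in \{\mathcal{F}^{0,0}, \mathcal{F}^{1,2}\}$, and by the symmetric argument $\mathcal{F}' \in \{\mathcal{F}^{0,0}, \mathcal{F}^{1,2}\}$ as well; combined with $\mathcal{F} \neq \mathcal{F}'$ this gives $\{\mathcal{F}, \mathcal{F}'\} = \{\mathcal{F}^{0,0}, \mathcal{F}^{1,2}\}$.

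To finish, I would rule out the two remaining cross-embeddings by a simple label argument. Note that $\mathcal{F}^{a,b} \vartriangleleft \mathcal{F}^{a',b'}$ requires the integer $a$ to appear as a label of some $u$-node of $\mathcal{F}^{a',b'}$. By Proposition~\ref{proposition:label:F12} every label of $\mathcal{F}^{1,2}$ lies in $\mathbb{Z}_+$, whereas by Theorem~\ref{theorem:label:Fab} the level-$n$ labels of $\mathcal{F}^{0,0}$ fill $\llbracket -F_{n+2}+1 \cdots 0 \rrbracket$ and are thus all nonpositive. Since $0$ is not a label of $\mathcal{F}^{1,2}$ and $1$ is not a label of $\mathcal{F}^{0,0}$, neither $\mathcal{F}^{0,0} \vartriangleleft \mathcal{F}^{1,2}$ nor $\mathcal{F}^{1,2} \vartriangleleft \mathcal{F}^{0,0}$ can hold, contradicting our standing assumption. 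Hence $\mathcal{F} = \mathcal{F}'$, completing antisymmetry and the proof.
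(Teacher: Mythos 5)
Your proof is correct and follows essentially the same route as the paper: reduce antisymmetry to Proposition~\ref{proposition:selfcontain} (reflexivity and transitivity being immediate) and then derive a contradiction. The only, harmless, difference is in the final step, where you rule out the cross-embeddings $\mathcal{F}^{0,0} \vartriangleleft \mathcal{F}^{1,2}$ and $\mathcal{F}^{1,2} \vartriangleleft \mathcal{F}^{0,0}$ directly by the signs of the labels, whereas the paper argues that a tree squeezed between two copies of $\mathcal{F}^{0,0}$ must represent $\mathbb{Z}_-$ exactly and hence equal $\mathcal{F}^{0,0}$; your version is, if anything, slightly more self-contained.
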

\begin{proof}
To prove that $\vartriangleleft$ defines an order relation, only antisymmetry needs to be checked. Assume that for $\mathcal{F},\mathcal{F'} \in \Phi$ we have  $\mathcal{F} \vartriangleleft \mathcal{F'}$ and $\mathcal{F'} \vartriangleleft \mathcal{F}$, but $\mathcal{F} \neq \mathcal{F'}$. Then $\mathcal{F} \vartriangleleft \mathcal{F'} \vartriangleleft \mathcal{F}$, so that $\mathcal{F}$ contains itself as a proper subtree. By Proposition \ref{proposition:selfcontain}, $\mathcal{F} = \mathcal{F}^{0,0}$ or $\mathcal{F} = \mathcal{F}^{1,2}$. If $\mathcal{F} = \mathcal{F}^{0,0}$, the inclusions $\mathcal{F}^{0,0} \vartriangleleft \mathcal{F'} \vartriangleleft \mathcal{F}^{0,0}$ imply that $\mathcal{F'}$ represents $\mathbb{Z}_-$ exactly, and must be $\mathcal{F}^{0,0}$, which is a contradiction. Similarly, $\mathcal{F} = \mathcal{F}^{1,2}$ leads to a contradiction. Thus, $\mathcal{F} = \mathcal{F'}$.
\end{proof}

The order relation $\vartriangleleft$ is not compatible with the group structure. Otherwise, for any trees $\mathcal{F}$, $\mathcal{G}$, $\mathcal{F'}$, $\mathcal{G'}$ we would have
\begin{displaymath}
\mathcal{F} \vartriangleleft \mathcal{G},\; \mathcal{F'} \vartriangleleft \mathcal{G'} \Longrightarrow (\mathcal{F} \oplus \mathcal{F'}) \vartriangleleft (\mathcal{G} \oplus \mathcal{G'}).
\end{displaymath}
A counterexample is given by $\mathcal{F}^{0,0} \vartriangleleft \mathcal{F}^{0,1}$, $\mathcal{F}^{0,0} \vartriangleleft \mathcal{F}^{1,1}$. The tree $\mathcal{F}^{0,0} \oplus \mathcal{F}^{0,0} = \mathcal{F}^{0,0}$ is not a subtree of $\mathcal{F}^{0,1} \oplus \mathcal{F}^{1,1} = \mathcal{F}^{1,2}$. Indeed, the labels of the tree $\mathcal{F}^{1,2}$ are all positive.

We conjecture that any two elements $\mathcal{F}$, $\mathcal{F'}$ of $\Phi$ endowed with the order relation $\vartriangleleft$ have a least upper bound $\mathcal{F} \vee \mathcal{F'}$. For example, Figure \ref{figure:fibtree:F01} shows that $\mathcal{F}^{0,1} = \mathcal{F}^{0,0} \vee \mathcal{F}^{1,2}$. 

This means that, given $\mathcal{F} = \mathcal{F}^{c,d}$ and $\mathcal{F'} = \mathcal{F}^{c',d'}$, we can find $\mathcal{G} = \mathcal{F}^{a,b}$ such that $\mathcal{F} \vartriangleleft \mathcal{G}$ and $\mathcal{F'} \vartriangleleft \mathcal{G}$. Then, as there are only finitely many subtrees between $\mathcal{G}$ and $\mathcal{F}$, and between $\mathcal{G}$ and $\mathcal{F'}$, a least upper bound can be found for $\mathcal{F}$ and $\mathcal{F'}$. This amounts at finding $a + b\varphi$ that is sent to both $c + d\varphi$ and $c' + d'\varphi$ by some composition of the maps $L$ and $R$. For example, 
\begin{displaymath}
18 - 10\varphi = L^{-1}R^{-2}(-1 + 2\varphi) = R^{-1}L^{-1}(-3 + 5\varphi).
\end{displaymath}
This is the smallest solution: it sends $-1 + 2\varphi$ and $-3 + 5\varphi$ to $18 - 10\varphi$, entailing $\mathcal{F}^{18,-10} = \mathcal{F}^{-1,2} \vee \mathcal{F}^{-3,5}$. However, this approach leads to complicated formulas, and we have not proven the conjecture.

\section{Concluding remarks}
We have described a set $\Phi$ of labeled Fibonacci trees representing Fibonacci sequences which has the structure of a commutative group isomorphic to $\mathbb{Z}^2$. The set $\Phi$ is moreover endowed with a partial order for which we conjecture that any two elements have a least upper bound. An infinite subset $\Psi$ of $\Phi$ was determined to represent every integer interval and every Fibonacci sequence. This corresponds to two key features of the Wythoff array extended to $\mathbb{Z}$ (Vandervelde \cite{Vandervelde 2012}). Accordingly, the labeled trees that belong to $\Psi$ can be considered as generalizations of the Wythoff array. 

The extended Wythoff array contains every integer exactly once, except for $-1$ that appears twice and $0$ that does not appear, and represents every nonzero Fibonacci sequence uniquely. For the elements of $\Psi$, every integer interval appears infinitely many times, and every Fibonacci sequence is represented infinitely many times, except for the zero sequence that is represented only once.

Finally, labeled trees similar to the labeled Fibonacci trees studied here could be constructed for other sets of sequences defined by parameterized recursions, e.g., triangular numbers, sequences of powers of 2, Perrin and Perrin-like numbers, Tribonacci and $k$-bonacci numbers, Pell numbers.

\bigskip
\hrule
\bigskip

\noindent 2010 \textit{Mathematics Subject Classification}: 11B39, 11Y55, 05C05.\\

\noindent \textit{Keywords}: Fibonacci sequence, Fibonacci tree, Wythoff pairs, Wythoff array.

\bigskip
\hrule
\bigskip


\begin{thebibliography}{99}
\bibitem{Brother U Alfred 1963} U. Alfred, \emph{On the Ordering of Fibonacci Sequences}, The Fibonacci Quarterly, \textbf{1} (1963), 43--46.
\bibitem{Hofstadter 1980} D. R. Hofstadter, \emph{G\"{o}del, Esher, Bach, An Eternal Golden Braid}, Vintage Books, New York, 1980.
\bibitem{Kimberling 1993} C. Kimberling, \emph{Interspersions and dispersions},
Proc. Amer. Math. Soc. \textbf{117} (1993), 313--321.
\bibitem{Morrison 1980} D. R. Morrison, \emph{A Stolarsky Array of Wythoff Pairs}, in A Collection of Manuscripts Related to the Fibonacci Sequence, Fibonacci Association, Santa Clara, CA, pp. 134--136, 1980.
\bibitem{Silber 1977} R. Silber, \emph{A Fibonacci Property of Wythoff Pairs},
The Fibonacci Quarterly, \textbf{15} (1977), 85--88.
\bibitem{Sloane} N. J. A. Sloane, \emph{The On--Line Encyclopedia of Integer Sequences}, \url{http://www.oeis.org}.
\bibitem{SloaneW} N. J. A. Sloane, \emph{The Wythoff Array and the Para-Fibonacci Sequences}, \url{http://www.oeis.org/classic.html}.
\bibitem{Tognetti 1990} K. P. Tognetti, G. Winley and T. van Ravenstein, \emph{The Fibonacci Tree, Hofstadter and the Golden String}, in Applications of the Fibonacci Numbers 3, G. E. Bergum, A. N. Philippou and A. F. Horadam eds., Kluwer Academic Publishing, pp. 325--334, 1990.
\bibitem{Vandervelde 2012} S. Vandervelde, \emph{On the Divisibility of Fibonacci Sequences by Primes of Index Two}, The Fibonacci Quarterly, \textbf{50} (2012), 207--216. 
\end{thebibliography}
\end{document}